\newcommand{\A}{{\cal A}}
\newcommand{\abs}[1]{\left|#1\right|}
\newcommand{\as}{\ins{as}}
\newcommand{\bdry}[1]{\partial #1}
\newcommand{\bgset}[1]{\big\{#1\big\}}
\newcommand{\G}{{\cal G}}
\newcommand{\closure}[1]{\overline{#1}}
\newcommand{\dualp}[3][]{\left(#2,#3\right)_{#1}}
\newcommand{\half}{\frac{1}{2}}
\newcommand{\hquad}{\hspace{0.08in}}
\newcommand{\ins}[1]{\hquad \text{#1} \hquad}
\newcommand{\ip}[3][]{\left(#2,#3\right)_{#1}}
\newcommand{\isom}{\approx}
\newcommand{\jump}[1]{\left[#1\right]}
\newcommand{\M}{{\cal M}}
\newcommand{\N}{\mathbb N}
\newcommand{\norm}[2][]{\left\|#2\right\|_{#1}}
\renewcommand{\O}{\text{O}}
\renewcommand{\o}{\text{o}}
\newcommand{\PS}[1]{$(\text{PS})_{#1}$}
\newcommand{\QED}{\mbox{\qedhere}}
\newcommand{\R}{\mathbb R}
\newcommand{\resp}[1]{\text{(resp. $#1$) }}
\newcommand{\restr}[2]{\left.#1\right|_{#2}}
\newcommand{\seq}[1]{\left(#1\right)}
\newcommand{\set}[1]{\left\{#1\right\}}
\DeclareMathOperator{\divg}{div}
\newenvironment{enumroman}{\begin{enumerate}

}{\end{enumerate}}
\newtheorem{corollary}{Corollary}[section]
\newtheorem{lemma}[corollary]{Lemma}
\newtheorem{proposition}[corollary]{Proposition}
\newtheorem{theorem}[corollary]{Theorem}
\numberwithin{equation}{section}
\title{\bf On Some Elliptic Interface Problems with Nonhomogeneous Jump Conditions\thanks{{\em MSC2010:} Primary 35J66, Secondary 35J20, 35P30
\newline \smallskip \indent\; {\em Key Words and Phrases:} elliptic interface problems, nonhomogeneous jump conditions, nontrivial solutions, Morse theory, critical groups, nonlinear eigenvalue problems, yang index, anisotropic problems, concentration compactness}}
\author{\bf T. Gnana Bhaskar and Kanishka Perera\\
Department of Mathematical Sciences\\
Florida Institute of Technology\\
Melbourne, FL 32901, USA\\
\em gtenali@fit.edu \& kperera@fit.edu}
\date{}
\begin{document}

\maketitle

\begin{abstract}
We obtain nontrivial solutions of some elliptic interface problems with nonhomogeneous jump conditions that arise in localized chemical reactions and nonlinear neutral inclusions. Our proofs in bounded domains use Morse theoretical arguments, in particular, critical group computations. An extension to the whole space is proved using concentration compactness arguments.
\end{abstract}

\section{Introduction}

In this paper we obtain nontrivial solutions of some elliptic interface problems with nonhomogeneous jump conditions. Such problems arise, for example, in localized chemical reactions and in nonlinear neutral inclusions.

Let $\Omega$ be a bounded domain in $\R^N,\, N \ge 2$, let $\Omega_1$ be a $C^1$-subdomain of $\Omega$ such that $\closure{\Omega}_1 \subset \Omega$ and $\Omega_2 = \Omega \setminus \closure{\Omega}_1$ is connected, and let $\Gamma = \bdry{\Omega_1}$. First we consider the interface problem
\begin{equation} \label{1.1}
\left\{\begin{aligned}
\Delta u & = 0 && \text{in } \Omega \setminus \Gamma\\[10pt]
\jump{u} & = 0, && - \jump{\frac{\partial u}{\partial \nu}} = g(x,u) && \text{on } \Gamma\\[10pt]
u & = 0 && \text{on } \bdry{\Omega},
\end{aligned}\right.
\end{equation}
where $\jump{u} = u_2 - u_1,\, u_i = \restr{u}{\Omega_i},\, i = 1,2$, $\jump{\partial u/\partial \nu} = \partial u_2/\partial \nu - \partial u_1/\partial \nu$, $\partial/\partial \nu$ is the exterior normal derivative on $\Gamma$, and $g \in C(\Gamma \times \R)$ satisfies the subcritical growth condition
\begin{equation} \label{1.2}
|g(x,t)| \le C \left(|t|^{q-1} + 1\right) \quad \forall (x,t) \in \Gamma \times \R
\end{equation}
for some $q \in (1,\bar{2})$. Here
\[
\bar{2} = \begin{cases}
2(N-1)/(N-2), & N \ge 3\\[10pt]
\infty, & N = 2
\end{cases}
\]
is the critical trace exponent. This problem arises, for example, in chemical reactions that are localized due to the presence of a catalyst (see Chadam and Yin \cite{MR1258034} and Pan \cite{MR1399066}).

A weak solution of problem \eqref{1.1} is a function $u \in H^1_0(\Omega)$ satisfying
\[
\int_\Omega \nabla u \cdot \nabla v - \int_\Gamma g(x,u)\, v = 0 \quad \forall v \in H^1_0(\Omega),
\]
where $H^1_0(\Omega)$ is the usual Sobolev space with $\norm{u}^2 = \int_\Omega |\nabla u|^2$. As noted by Tian and Ge \cite{MR2465922} and Nieto and O'Regan \cite{MR2474254} in the ODE case, weak solutions of this problem coincide with critical points of the $C^1$-functional
\[
\Phi(u) = \half \int_\Omega |\nabla u|^2 - \int_\Gamma G(x,u), \quad u \in H^1_0(\Omega),
\]
where $G(x,t) = \int_0^t g(x,s)\, ds$.

We assume that the jump $g$ is asymptotically linear near zero and satisfies the Ambrosetti-Rabinowitz superlinearity condition near infinity:
\begin{equation} \label{1.3}
g(x,t) = \mu t + \o(t) \as t \to 0, \text{ uniformly on } \Gamma
\end{equation}
for some $\mu \in \R$, and
\begin{equation} \label{1.4}
0 < \theta\, G(x,t) \le t\, g(x,t), \quad x \in \Gamma,\, |t| \text{ large}
\end{equation}
for some $\theta > 2$. Integrating \eqref{1.4} gives
\begin{equation} \label{1.5}
G(x,t) \ge c\, |t|^\theta - C \quad \forall (x,t) \in \Gamma \times \R
\end{equation}
for some $c > 0$. A model example is
\[
g(x,t) = \mu t + |t|^{q-2}\, t,
\]
where $q \in (2,\bar{2})$.

Assumption \eqref{1.3} leads us to consider the interface eigenvalue problem
\[
\left\{\begin{aligned}
\Delta u & = 0 && \text{in } \Omega \setminus \Gamma\\[10pt]
\jump{u} & = 0, && - \jump{\frac{\partial u}{\partial \nu}} = \mu u && \text{on } \Gamma\\[10pt]
u & = 0 && \text{on } \bdry{\Omega}.
\end{aligned}\right.
\]
The spectrum $\sigma(\Gamma)$ of this problem consists of an increasing and unbounded sequence of positive eigenvalues of finite multiplicities.

\begin{theorem} \label{Theorem 1.1}
Assume \eqref{1.2} with $q \in (1,\bar{2})$, \eqref{1.3}, and \eqref{1.4} with $\theta > 2$. If $\mu \notin \sigma(\Gamma)$, then problem \eqref{1.1} has a nontrivial solution.
\end{theorem}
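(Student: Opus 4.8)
The plan is to obtain the solution as a critical point of $\Phi$ other than $0$, by computing and comparing the critical groups of $\Phi$ at $0$ and at infinity. I would start from the orthogonal decomposition $H^1_0(\Omega) = \big(H^1_0(\Omega_1) \oplus H^1_0(\Omega_2)\big) \oplus \M$, where $\M$ is the subspace of functions that are harmonic in $\Omega_1$ and in $\Omega_2$: for $w$ in the first summand and $h \in \M$ one has $\norm{w+h}^2 = \norm{w}^2 + \norm{h}^2$ and $(w+h)|_\Gamma = h|_\Gamma$, hence $\Phi(w+h) = \half \norm{w}^2 + \Phi(h)$. It follows that every critical point of $\Phi$ lies in $\M$ and, by the product structure, that the critical groups of $\Phi$ at $0$ and at infinity equal those of the restriction $\psi := \restr{\Phi}{\M}$; so it is enough to find a nonzero critical point of $\psi$ on the Hilbert space $\M$. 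On $\M$ the trace map into $L^q(\Gamma)$ is still compact (this is where $q < \bar{2}$ enters), so $\psi'(h) = h - K(h)$ with $\dualp{K(h)}{v} = \int_\Gamma g(x,h)\,v$ and $K$ compact, and $h|_\Gamma \not\equiv 0$ for every $h \in \M \setminus \set{0}$.

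I would next verify the Palais--Smale condition for $\psi$ together with the geometry at infinity. Combining $\dualp{\psi'(h)}{h}$ with $\theta\,\psi(h)$ and using \eqref{1.4} gives $\theta\,\psi(h) - \dualp{\psi'(h)}{h} \ge (\theta/2 - 1)\,\norm{h}^2 - C$; this bounds any Palais--Smale sequence, and compactness of $K$ then makes it convergent. The same estimate yields $\dualp{\psi'(h)}{h} \le \theta\,\psi(h) + C$, hence $\dualp{\psi'(h)}{h} < 0$ whenever $\psi(h) \le -M$ for a fixed large $M$, while \eqref{1.5} with $\theta > 2$ shows $\psi(th) \to -\infty$ as $t \to \infty$ for each $h \in \M \setminus \set{0}$. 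Therefore every ray from the origin eventually enters the sublevel set $\psi^{-M} := \set{\psi \le -M}$ and stays there, so sliding radially outward to the level $-M$ deformation retracts $\M \setminus \set{0}$ onto $\psi^{-M}$; since $\M$ is infinite dimensional, $\M \setminus \set{0}$, and hence $\psi^{-M}$, is contractible, and the exact sequence of the pair $(\M, \psi^{-M})$ gives $H_k(\M, \psi^{-M}) = 0$ for all $k$.

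For the behaviour at $0$ I would use \eqref{1.3}: $K(h) = \mu\,Lh + \o(\norm{h})$ as $h \to 0$, where $L$ is the compact self-adjoint operator on $\M$ defined by $\dualp{Lh}{v} = \int_\Gamma hv$, so $\psi(h) = \half\dualp{(I - \mu L)h}{h} + \o(\norm{h}^2)$. Since $Lh = \kappa h$ with $\kappa \ne 0$ forces $h$ to solve the interface eigenvalue problem with eigenvalue $1/\kappa$, the spectrum of $I - \mu L$ is $\set{1 - \mu/\lambda : \lambda \in \sigma(\Gamma)} \cup \set{1}$, which misses $0$ exactly because $\mu \notin \sigma(\Gamma)$; its Morse index is the finite number $m_\mu := \sum_{\lambda \in \sigma(\Gamma),\, \lambda < \mu} \dim E_\lambda$, with $m_\mu = 0$ when $\mu$ lies below the least eigenvalue. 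Thus $0$ is an isolated critical point of $\psi$ whose linearization is invertible, and the standard critical group computation for such a critical point of a $C^1$ functional gives $C_k(\psi, 0) = \delta_{k, m_\mu}\,\R$.

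Finally, suppose $0$ were the only critical point of $\psi$. Then, with $-M$ chosen below the unique critical value $\psi(0) = 0$, the Palais--Smale condition and the deformation lemma give $0 = H_k(\M, \psi^{-M}) \isom C_k(\psi, 0) = \delta_{k, m_\mu}\,\R$, which is false for $k = m_\mu$. Hence $\psi$, and therefore $\Phi$, has a nontrivial critical point, i.e.\ \eqref{1.1} has a nontrivial solution. I expect the real work to be in the preliminaries rather than in the Morse theory: setting up the decomposition and the identity $\Phi(w+h) = \half\norm{w}^2 + \Phi(h)$, proving that $K$ is compact and differentiable at $0$ with derivative $\mu L$ (a Nemytskii-operator estimate on the trace space combining \eqref{1.2} and \eqref{1.3}), and recording the spectral properties of the interface eigenvalue problem used above; once these are in place, the critical group computations and the concluding comparison are routine.
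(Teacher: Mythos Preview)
Your proposal is correct and follows essentially the same approach as the paper: the same orthogonal decomposition into $V = H^1_0(\Omega_1) \oplus H^1_0(\Omega_2)$ and its harmonic complement $\M = V^\perp$, the same Ambrosetti--Rabinowitz estimate for \PS{} and for the radial deformation showing the critical groups at infinity vanish, and the same linearization at zero giving Morse index $m_\mu = \sum_{\mu_l < \mu} d_l$. The only difference is cosmetic: you first reduce to $\psi = \restr{\Phi}{\M}$ via the product identity $\Phi(w+h) = \half\norm{w}^2 + \psi(h)$, whereas the paper keeps $\Phi$ on the full space and carries $V$ along in the deformation, arriving at $H_q(H^1_0(\Omega), H^1_0(\Omega) \setminus V) \isom H_q(\M, \M \setminus \set{0}) = 0$ in place of your direct contractibility of $\M \setminus \set{0}$.
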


Our proof of this theorem will be based on explicitly computing the critical groups of $\Phi$ at zero and at infinity, and showing that they are nonisomorphic in some dimension. These groups are defined by
\[
C_q(\Phi,0) = H_q(\Phi^0 \cap U,\Phi^0 \cap U \setminus \set{0}), \quad C_q(\Phi,\infty) = H_q(H^1_0(\Omega),\Phi^a), \quad q \ge 0,
\]
where $\Phi^0 = \bgset{u \in H^1_0(\Omega) : \Phi(u) \le 0}$, $U$ is any neighborhood of zero, $a < 0$ is such that $\Phi$ has no critical points in $\Phi^a = \bgset{u \in H^1_0(\Omega) : \Phi(u) \le a}$, and $H_\ast(\cdot,\cdot)$ are the relative singular homology groups. We refer the reader to Chang \cite{MR94e:58023} and Mawhin and Willem \cite{MR90e:58016} for the necessary background on Morse theory.

We also indicate how Theorem \ref{Theorem 1.1} can be extended to the $p$-Laplacian case
\begin{equation} \label{1.6}
\left\{\begin{aligned}
\Delta_p\, u & = 0 && \text{in } \Omega \setminus \Gamma\\[10pt]
\jump{u} & = 0, && - \jump{|\nabla u|^{p-2}\, \frac{\partial u}{\partial \nu}} = g(x,u) && \text{on } \Gamma\\[10pt]
u & = 0 && \text{on } \bdry{\Omega},
\end{aligned}\right.
\end{equation}
where $\Delta_p\, u = \divg (|\nabla u|^{p-2}\, \nabla u)$ is the $p$-Laplacian of $u,\, p > 2$, $g$ satisfies \eqref{1.2} with $q \in (1,\bar{p})$, and $\bar{p} = (N-1)p/(N-p)$ if $N > p$ and $\bar{p} = \infty$ if $N \le p$. A weak solution $u \in W^{1,\, p}_0(\Omega)$ of this problem satisfies
\[
\int_\Omega |\nabla u|^{p-2}\, \nabla u \cdot \nabla v - \int_\Gamma g(x,u)\, v = 0 \quad \forall v \in W^{1,\, p}_0(\Omega),
\]
and the associated variational functional is
\[
\Phi(u) = \frac{1}{p} \int_\Omega |\nabla u|^p - \int_\Gamma G(x,u), \quad u \in W^{1,\, p}_0(\Omega).
\]

We replace \eqref{1.3} with
\begin{equation} \label{1.7}
g(x,t) = \mu\, |t|^{p-2}\, t + \o(|t|^{p-1}) \as t \to 0, \text{ uniformly on } \Gamma
\end{equation}
and take $\theta > p$ in \eqref{1.4}. Our model example is $g(x,t) = \mu\, |t|^{p-2}\, t + |t|^{q-2}\, t$, where $q \in (p,\bar{p})$. The corresponding eigenvalue problem is
\begin{equation} \label{1.8}
\left\{\begin{aligned}
\Delta_p\, u & = 0 && \text{in } \Omega \setminus \Gamma\\[10pt]
\jump{u} & = 0, && - \jump{|\nabla u|^{p-2}\, \frac{\partial u}{\partial \nu}} = \mu\, |u|^{p-2}\, u && \text{on } \Gamma\\[10pt]
u & = 0 && \text{on } \bdry{\Omega},
\end{aligned}\right.
\end{equation}
and the spectrum $\sigma_p(\Gamma)$ of this problem consists of the set of all $\mu \in \R$ for which it has a nontrivial solution.

\begin{theorem} \label{Theorem 1.2}
Assume \eqref{1.2} with $q \in (1,\bar{p})$, \eqref{1.4} with $\theta > p$, and \eqref{1.7}. If $\mu \notin \sigma_p(\Gamma)$, then problem \eqref{1.6} has a nontrivial solution.
\end{theorem}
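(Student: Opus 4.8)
The plan is to mirror the Morse-theoretic strategy used for Theorem~\ref{Theorem 1.1}, carrying out the critical group computations for the $p$-Laplacian functional
\[
\Phi(u) = \frac{1}{p} \int_\Omega |\nabla u|^p - \int_\Gamma G(x,u), \quad u \in W^{1,\, p}_0(\Omega).
\]
First I would verify the Palais-Smale condition. Using \eqref{1.4} in the standard Ambrosetti-Rabinowitz fashion, for a (PS) sequence $(u_j)$ one estimates $\Phi(u_j) - \frac{1}{\theta}\,\Phi'(u_j)\,u_j \ge (\frac{1}{p} - \frac{1}{\theta}) \norm{u_j}^p - C$, which bounds $\norm{u_j}$ since $\theta > p$; the trace embedding $W^{1,p}_0(\Omega) \hookrightarrow L^q(\Gamma)$ is compact for $q \in (1,\bar p)$, so $(u_j)$ has a convergent subsequence by the usual $(S_+)$ argument for $\Delta_p$.

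Next I would compute the critical groups at infinity. Since \eqref{1.5} gives $G(x,t) \ge c|t|^\theta - C$ with $\theta > p$, along each ray $u = t\varphi$ (with $\varphi$ having nonzero trace on $\Gamma$) one has $\Phi(t\varphi) \to -\infty$ and $\frac{d}{dt}\Phi(t\varphi) < 0$ for $t$ large; this lets one build a deformation retraction of $W^{1,p}_0(\Omega) \setminus B_R$ onto a large sphere and of $\Phi^a$ onto that same sphere, whence $\Phi$ is "sublinear at infinity" in the Morse sense and $C_q(\Phi,\infty) = \widetilde{H}_{q-1}(S^\infty) = 0$ for all $q$ (the unit sphere of the infinite-dimensional space $W^{1,p}_0(\Omega)$ is contractible). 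Then I would compute the critical groups at zero. Here \eqref{1.7} says the functional near zero is a lower-order perturbation of
\[
\Psi(u) = \frac{1}{p} \int_\Omega |\nabla u|^p - \frac{\mu}{p} \int_\Gamma |u|^p,
\]
whose only critical point is $0$ precisely when $\mu \notin \sigma_p(\Gamma)$. Using the homogeneity of $\Psi$ and the nonlinear eigenvalue machinery (the Yang index of the sublevel sets of the even functional $\int_\Gamma |u|^p$ restricted to $\{\int_\Omega|\nabla u|^p = 1\}$, as developed by Perera-Agarwal-O'Regan), one shows $C_q(\Phi,0) = C_q(\Psi,0) \ne 0$ for $q$ equal to the "index" associated with $\mu$ — in particular these groups are nonzero in some dimension $m$ depending on where $\mu$ sits relative to $\sigma_p(\Gamma)$.

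Finally, if $\Phi$ had no nontrivial critical point, then $0$ would be the only critical point, and by the standard Morse-theoretic consequence (Chang \cite{MR94e:58023}) $C_q(\Phi,0) \isom C_q(\Phi,\infty)$ for all $q$; but $C_m(\Phi,0) \ne 0 = C_m(\Phi,\infty)$, a contradiction. Hence \eqref{1.6} has a nontrivial solution. The main obstacle I anticipate is the computation of $C_q(\Phi,0)$: unlike the semilinear case, $\Psi$ is not quadratic, there is no spectral decomposition into eigenspaces, and $\sigma_p(\Gamma)$ is only known as an abstract set of eigenvalues, so one cannot simply read off a Morse index. The argument must instead go through the cohomological (Yang) index of the associated symmetric sublevel sets on the trace sphere and a careful comparison of $\Phi$ with its $p$-homogeneous principal part via \eqref{1.7}; verifying that this lower-order term does not disturb the critical groups (e.g.\ via an $(S_+)$-type estimate or a homotopy invariance argument on small balls) is the technical heart of the proof.
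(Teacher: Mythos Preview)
Your overall strategy matches the paper's: verify (PS), show $C_q(\Phi,\infty)=0$, compute $C_q(\Phi,0)$ via the $p$-homogeneous comparison functional and the Yang-index eigenvalue sequence, and conclude by the mismatch. The (PS) argument and the treatment of $C_q(\Phi,0)$ are essentially what the paper does (it simply cites \cite{MR1998432,MR2640827} for the fact that $C_q(\Phi_0,0)=\delta_{q0}\,\G$ when $\mu<\mu_1$ and $C_l(\Phi_0,0)\ne 0$ when $\mu_l<\mu<\mu_{l+1}$, together with the homotopy $C_q(\Phi,0)\isom C_q(\Phi_0,0)$).

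However, your computation of $C_q(\Phi,\infty)$ has a real gap. You write that $\Phi(t\varphi)\to -\infty$ along rays ``with $\varphi$ having nonzero trace on $\Gamma$'' and then claim a deformation retraction of $W^{1,p}_0(\Omega)\setminus B_R$ and of $\Phi^a$ onto the full unit sphere. But for $\varphi$ in the (infinite-dimensional) subspace $V=W^{1,p}_0(\Omega_1)\oplus W^{1,p}_0(\Omega_2)$ of functions with zero trace on $\Gamma$, one has $\Phi(t\varphi)=\frac{t^p}{p}\norm{\varphi}^p\to +\infty$, so $\Phi^a$ misses every such ray and cannot be retracted onto the whole sphere. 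The paper fixes this by working with the direct-sum decomposition $W^{1,p}_0(\Omega)=V\oplus W$, where $W=\bgset{u:\Delta u=0\text{ in }\Omega\setminus\Gamma}$; the radial deformation then shows $\Phi^a$ is a deformation retract of $W^{1,p}_0(\Omega)\setminus V$, whence
\[
C_q(\Phi,\infty)\isom H_q\big(W^{1,p}_0(\Omega),\,W^{1,p}_0(\Omega)\setminus V\big)\isom H_q\big(W,\,W\setminus\set{0}\big)=0
\]
because $W$ is infinite-dimensional. So the vanishing you want does hold, but it comes from the infinite dimensionality of the \emph{complement} $W$, not from contractibility of the full sphere; your argument as written needs this decomposition inserted to go through.
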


Our proof here will use an increasing and unbounded sequence of eigenvalues constructed via the Yang index as in Perera \cite{MR1998432} (see also Perera et al. \cite{MR2640827}). The standard sequence of eigenvalues based on the Krasnosel$'$ski\u\i\ genus does not provide sufficient information about the critical groups to prove this theorem.

Next we consider the anisotropic problem
\begin{equation} \label{1.9}
\left\{\begin{aligned}
\Delta_p\, u_1 & = 0 && \text{in } \Omega_1\\[10pt]
\Delta u_2 & = 0 && \text{in } \Omega_2\\[10pt]
u_1 & = u_2, && - \left(\frac{\partial u_2}{\partial \nu} - |\nabla u_1|^{p-2}\, \frac{\partial u_1}{\partial \nu}\right) = g(x,u_1) && \text{on } \Gamma\\[10pt]
u_2 & = 0 && \text{on } \bdry{\Omega},
\end{aligned}\right.
\end{equation}
where $p > 2$ and $g$ satisfies \eqref{1.2} with $q \in (1,\bar{p})$. Problems of this type arise in nonlinear neutral inclusions, in particular, in assemblages of neutral coated spheres (see Jim\'{e}nez et al. \cite{JiVeSa}).

We work in the space
\[
X = \bgset{u = (u_1,u_2) \in W^{1,\, p}(\Omega_1) \times H^1(\Omega_2) : u_1 = u_2 \text{ on } \Gamma,\, u_2 = 0 \text{ on } \bdry{\Omega}},
\]
which is a closed subspace of $W^{1,\, p}(\Omega_1) \times H^1(\Omega_2)$. A norm on $X$, equivalent to the standard norm of $W^{1,\, p}(\Omega_1) \times H^1(\Omega_2)$, can be defined by
\[
\norm{u} = \left(\int_{\Omega_1} |\nabla u_1|^p\right)^{1/p} + \left(\int_{\Omega_2} |\nabla u_2|^2\right)^{1/2}.
\]
A weak solution $u \in X$ of problem \eqref{1.9} satisfies
\[
\int_{\Omega_1} |\nabla u_1|^{p-2}\, \nabla u_1 \cdot \nabla v_1 + \int_{\Omega_2} \nabla u_2 \cdot \nabla v_2 - \int_\Gamma g(x,u_1)\, v_1 = 0 \quad \forall v \in X,
\]
and they coincide with critical points of the functional
\[
\Phi(u) = \frac{1}{p} \int_{\Omega_1} |\nabla u_1|^p + \half \int_{\Omega_2} |\nabla u_2|^2 - \int_\Gamma G(x,u_1), \quad u \in X.
\]

\begin{theorem} \label{Theorem 1.3}
Assume \eqref{1.2} with $q \in (1,\bar{p})$,
\begin{equation} \label{1.10}
g(x,t) = \o(|t|^{p-1}) \as t \to 0, \text{ uniformly on } \Gamma,
\end{equation}
and \eqref{1.4} with $\theta > p$. Then problem \eqref{1.9} has a nontrivial solution.
\end{theorem}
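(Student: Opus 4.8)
The plan is to apply the Mountain Pass Theorem to the functional $\Phi$ on $X$, exactly as one does for superlinear problems, and then verify that the resulting critical point is nontrivial because its energy is strictly positive while $\Phi(0)=0$. Concretely, I would first check the compactness condition. Using \eqref{1.2} with $q\in(1,\bar{p})$, the trace embeddings $W^{1,p}(\Omega_1)\hookrightarrow L^r(\Gamma)$ for $r<\bar p$ (and the fact that the trace of $u_1$ agrees with that of $u_2$) are compact, so the functional $u\mapsto\int_\Gamma G(x,u_1)$ and its derivative are compact on $X$. To get boundedness of a \PS{} sequence $(u^{(k)})$ I would use the Ambrosetti--Rabinowitz inequality \eqref{1.4}: testing $\Phi'(u^{(k)})$ against $u^{(k)}$ and combining with $\Phi(u^{(k)})$, the term $\frac1p\int_{\Omega_1}|\nabla u_1^{(k)}|^p$ is controlled by $\frac1\theta$ times the same quantity plus lower order, and likewise the $H^1(\Omega_2)$ part, so since $\theta>p>2$ one gets $\norm{u^{(k)}}$ bounded (the mismatch between the exponents $p$ and $2$ is harmless because both contribute with the favourable sign). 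Boundedness plus the compactness of the nonlinear term then yields a strongly convergent subsequence, since the principal part $u\mapsto(|\nabla u_1|^{p-2}\nabla u_1,\nabla u_2)$ satisfies the $(S_+)$ property on $X$.

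Next I would verify the mountain pass geometry. Near zero, \eqref{1.10} gives, for any $\varepsilon>0$, $|G(x,t)|\le \varepsilon|t|^p$ for $|t|$ small, and \eqref{1.2} controls $G$ for $|t|$ large by $C(|t|^q+1)$ with $q<\bar p$; interpolating, $\int_\Gamma G(x,u_1)\le \varepsilon\, C_1 \norm{u}^p + C_2 \norm{u}^q$ using the trace inequality. Since $q>p$ by \eqref{1.4} (as $G\ge c|t|^\theta-C$ with $\theta>p$ forces genuine superlinearity, so the relevant $q$ in the model is $>p$; more precisely one only needs that $\int_\Gamma G(x,u_1)=\o(\norm u^p)$ as $u\to0$, which follows directly from \eqref{1.10} and subcriticality) and using that on a small sphere $\norm u=\rho$ the quantity $\frac1p\int_{\Omega_1}|\nabla u_1|^p+\frac12\int_{\Omega_2}|\nabla u_2|^2$ is bounded below by a positive constant times $\rho^p$ when $\rho\le1$ (the $H^1$ term dominates the $W^{1,p}$ term on the unit ball, or one simply notes each term is nonnegative and their sum controls $\norm u$), one gets $\Phi(u)\ge\alpha>0$ for $\norm u=\rho$ small. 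For the second geometric condition, fix any $u=(u_1,u_2)\in X$ with $u_1\not\equiv0$ on $\Gamma$ and look at $\Phi(su)$ as $s\to+\infty$: by \eqref{1.5}, $\Phi(su)\le \frac{s^p}p\int_{\Omega_1}|\nabla u_1|^p+\frac{s^2}2\int_{\Omega_2}|\nabla u_2|^2 - c\,s^\theta\int_\Gamma|u_1|^\theta + C|\Gamma|\to-\infty$ since $\theta>p>2$. Hence $\Phi(s_0u)<0=\Phi(0)$ for some large $s_0$, and the Mountain Pass Theorem produces a critical point $u^\ast$ with $\Phi(u^\ast)\ge\alpha>0=\Phi(0)$, so $u^\ast\neq0$.

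The one point requiring a little care — and what I expect to be the main obstacle — is the interplay between the two different growth exponents $p$ (on $\Omega_1$) and $2$ (on $\Omega_2$) in the \PS{} and compactness arguments, since $X$ is not a space with a single homogeneous norm. The resolution is that both quadratic-type and $p$-type terms enter the Ambrosetti--Rabinowitz computation with the \emph{same} sign and are each dominated by $\frac1\theta t g(x,t)$ after integration, with $\theta>p>2$, so no cancellation occurs; and for the $(S_+)$ property one treats the two components separately, using weak convergence in $W^{1,p}(\Omega_1)\times H^1(\Omega_2)$ together with $\Phi'(u^{(k)})\to0$ and the compactness of the trace term to deduce $\int_{\Omega_1}|\nabla u_1^{(k)}|^{p-2}\nabla u_1^{(k)}\cdot\nabla(u_1^{(k)}-u_1)\to0$ and $\int_{\Omega_2}\nabla u_2^{(k)}\cdot\nabla(u_2^{(k)}-u_2)\to0$, whence strong convergence in each factor by the standard $(S_+)$ arguments for $-\Delta_p$ and $-\Delta$ respectively. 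Note also that, unlike Theorems \ref{Theorem 1.1} and \ref{Theorem 1.2}, no spectral hypothesis is needed: \eqref{1.10} makes zero a strict local minimum of $\Phi$, so the mountain pass value is automatically positive and the solution automatically nontrivial, which is why critical group computations are unnecessary here.
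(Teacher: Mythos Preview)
Your argument is correct, but it follows a different route from the paper. Both proofs begin identically: from \eqref{1.10} and \eqref{1.2} one shows that zero is a strict local minimizer of $\Phi$ (the paper does this cleanly via the inequality $(a+b)^p \le 2^p(a^p+b^2)$ for $a\ge0$, $0\le b\le1$, which gives $\Phi(u)\ge \big(\tfrac{1}{2^p p}+\o(1)\big)\norm{u}^p$ near $0$), and the \PS{} verification via $\theta\,\Phi(u_j)-\dualp{\Phi'(u_j)}{u_j}$ is the same. From there the paper stays in the Morse-theoretic framework used for Theorems~\ref{Theorem 1.1} and~\ref{Theorem 1.2}: it records $C_q(\Phi,0)=\delta_{q0}\,\G$, proves $C_q(\Phi,\infty)=0$ for all $q$ by the deformation argument of Lemma~\ref{Lemma 2.1}\,\ref{Lemma 2.1.ii} applied to the decomposition $X=V\oplus W$ with $V=W^{1,p}_0(\Omega_1)\oplus H^1_0(\Omega_2)$, and concludes from the mismatch at $q=0$. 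You instead exploit directly that a strict local minimum plus $\Phi(su)\to-\infty$ along a ray gives mountain-pass geometry, and obtain a critical point at a positive level. Your approach is more elementary and self-contained for this particular theorem; the paper's approach has the virtue of uniformity with Theorems~\ref{Theorem 1.1}--\ref{Theorem 1.2}, where zero need not be a local minimizer and the critical-group machinery is genuinely needed. One small remark on your $(S_+)$ step: you cannot literally test against the components $(u_1^{(k)}-u_1,0)$ and $(0,u_2^{(k)}-u_2)$ separately, since these need not lie in $X$; but testing against $u^{(k)}-u\in X$ gives that the \emph{sum} of the two monotonicity terms tends to zero, and since each is nonnegative they vanish individually, which is all you need.
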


Finally we consider an interface eigenvalue problem in the whole space $\R^N,\, N \ge 2$, with \linebreak $\R^{N-1} \times \set{0}$ as the interface, which we identify with $\R^{N-1}$. We consider
\begin{equation} \label{1.11}
\left\{\begin{aligned}
- \Delta u + V(x)\, u & = 0 && \text{in } \R^N \setminus \R^{N-1}\\[10pt]
\jump{u} & = 0, && - \jump{\frac{\partial u}{\partial x_N}} = \lambda\, |u|^{p-2}\, u && \text{on } \R^{N-1}\\[10pt]
u(x) & \to 0 && \text{as } |x| \to \infty,
\end{aligned}\right.
\end{equation}
where $V \in L^\infty(\R^N)$, $\jump{u} = u_+ - u_-,\, u_+ = \restr{u}{\R^{N-1} \times (0,+ \infty)},\, u_- = \restr{u}{\R^{N-1} \times (- \infty,0)}$, $\jump{\partial u/\partial x_N} = \partial u_+/\partial x_N - \partial u_-/\partial x_N$, $\lambda \in \R$, and $p \in (2,\bar{2})$. Writing $\R^N = \R^{N-1} \oplus \R,\, x = (x',x_N)$, we assume that
\begin{equation} \label{1.12}
\inf_{x \in \R^N} V(x) > 0, \qquad \lim_{|x'| \to \infty} V(x) = V^\infty > 0 \quad \forall x_N \in \R.
\end{equation}
A weak solution of this problem is a function $u \in H^1(\R^N)$ satisfying
\[
\int_{\R^N} \nabla u \cdot \nabla v + V(x)\, uv = \lambda \int_{\R^{N-1}} |u|^{p-2}\, uv \quad \forall v \in H^1(\R^N),
\]
where $H^1(\R^N)$ is the usual Sobolev space with the norm $\norm{\cdot}$ induced by the inner product
\[
\ip{u}{v} = \int_{\R^N} \nabla u \cdot \nabla v + V^\infty\, uv,
\]
which is equivalent to the standard norm.

Let
\[
I(u) = \int_{\R^{N-1}} |u|^p, \quad J(u) = \int_{\R^N} |\nabla u|^2 + V(x)\, u^2, \quad u \in H^1(\R^N).
\]
Then the eigenfunctions of \eqref{1.11} on the manifold
\[
\M = \bgset{u \in H^1(\R^N) : I(u) = 1}
\]
and the corresponding eigenvalues coincide with the critical points and the critical values of the constrained functional $\restr{J}{\M}$, respectively.

We will first show that the autonomous problem at infinity,
\[
\left\{\begin{aligned}
- \Delta u + V^\infty\, u & = 0 && \text{in } \R^N \setminus \R^{N-1}\\[10pt]
\jump{u} & = 0, && - \jump{\frac{\partial u}{\partial x_N}} = \lambda\, |u|^{p-2}\, u && \text{on } \R^{N-1}\\[10pt]
u(x) & \to 0 && \text{as } |x| \to \infty,
\end{aligned}\right.
\]
has a least energy solution on $\M$. Let
\[
J^\infty(u) = \int_{\R^N} |\nabla u|^2 + V^\infty\, u^2, \quad u \in H^1(\R^N)
\]
be the corresponding functional. Then
\[
\lambda_1^\infty := \inf_{u \in \M}\, J^\infty(u) > 0
\]
by the Sobolev trace imbedding.

\begin{theorem} \label{Theorem 1.4}
Assume $V^\infty > 0$ and $p \in (2,\bar{2})$. Then the infimum $\lambda_1^\infty$ is attained at a function $w_1^\infty > 0$.
\end{theorem}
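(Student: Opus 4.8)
The plan is to realize the minimizer via the concentration compactness principle of Lions applied to a minimizing sequence. Observe first that $J^\infty(u) = \norm{u}^2$ for the norm induced by $\ip{\cdot}{\cdot}$, so $J^\infty$ is an equivalent norm squared on $H^1(\R^N)$; in particular it is coercive and weakly lower semicontinuous. Let $\seq{u_n} \subset \M$ with $J^\infty(u_n) \to \lambda_1^\infty$. Since $\abs{\nabla \abs{u_n}} = \abs{\nabla u_n}$ a.e., replacing $u_n$ by $\abs{u_n}$ changes neither $I$ nor $J^\infty$, so we may assume $u_n \ge 0$; by coercivity $\seq{u_n}$ is bounded in $H^1(\R^N)$, and $\mu_n := \abs{u_n}^p\, dx'$ is a sequence of probability measures on $\R^{N-1}$. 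Passing to a subsequence, Lions' first concentration compactness lemma gives, for $\seq{\mu_n}$, one of the three alternatives \emph{vanishing}, \emph{dichotomy}, or \emph{compactness}, and I would rule out the first two and extract the minimizer from the third.

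Vanishing cannot occur: if $\sup_{y' \in \R^{N-1}} \int_{\abs{x'-y'}<R} \abs{u_n}^p\, dx' \to 0$ for every $R>0$, then by H\"older the same holds with the exponent $2$ in place of $p$, and the trace version of Lions' vanishing lemma (valid since $2 < p < \bar{2}$) gives $u_n \to 0$ in $L^p(\R^{N-1})$, contradicting $I(u_n)=1$. To exclude dichotomy, set $S(\alpha) = \inf\bgset{J^\infty(u) : I(u) = \alpha}$ for $\alpha>0$; the scaling $u \mapsto \alpha^{1/p}\, u$ gives $S(\alpha) = \alpha^{2/p}\, \lambda_1^\infty$, and since $p>2$ we have $2/p<1$, whence
\[
S(1) = \lambda_1^\infty < \bigl(\alpha^{2/p} + (1-\alpha)^{2/p}\bigr)\, \lambda_1^\infty = S(\alpha) + S(1-\alpha) \ins{for all} \alpha \in (0,1).
\]
If dichotomy held at some $\alpha \in (0,1)$, then cutting $u_n$ off in the $x'$ variable on a dyadic annulus where $\int_{\R^N} \abs{\nabla u_n}^2 + u_n^2$ is small would split $u_n = v_n + w_n + r_n$ with the $x'$-supports of $v_n$ and $w_n$ a distance tending to infinity apart, $\norm{r_n} \to 0$, $I(v_n) \to \alpha$, $I(w_n) \to 1-\alpha$, and $J^\infty(u_n) \ge J^\infty(v_n) + J^\infty(w_n) - \o(1)$; letting $n \to \infty$ would give $\lambda_1^\infty \ge S(\alpha) + S(1-\alpha)$, contradicting the displayed inequality.

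In the compactness case there are $y_n' \in \R^{N-1}$ for which $\abs{\tilde{u}_n}^p\, dx'$ is tight, where $\tilde{u}_n(x) = u_n(x'+y_n', x_N)$. By translation invariance of $I$ and $J^\infty$ one has $\tilde{u}_n \in \M$ and $J^\infty(\tilde{u}_n) \to \lambda_1^\infty$, and along a further subsequence $\tilde{u}_n \rightharpoonup w$ in $H^1(\R^N)$. Tightness together with the compactness of the trace embedding on bounded slabs $B_R(0) \times (-1,1)$ (again using $p < \bar{2}$) gives $\tilde{u}_n \to w$ in $L^p(\R^{N-1})$, so $I(w)=1$, hence $w \in \M$, $w \not\equiv 0$, and $w \ge 0$. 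Weak lower semicontinuity yields $J^\infty(w) \le \liminf_n J^\infty(\tilde{u}_n) = \lambda_1^\infty \le J^\infty(w)$, so $w$ is a minimizer. A Lagrange multiplier computation (testing $(J^\infty)'(w) = \Lambda\, I'(w)$ with $w$ gives $\Lambda = 2\lambda_1^\infty/p$) shows $w$ is a weak solution of the autonomous problem at infinity with $\lambda = \lambda_1^\infty$. Finally, elliptic regularity makes $w$ smooth in each open half space; the strong maximum principle applied to $-\Delta w + V^\infty w = 0$ there (with $w \ge 0$, $w \not\equiv 0$) gives $w>0$ in each open half space, and the Hopf lemma applied on both sides of $\R^{N-1}$, combined with $-\jump{\partial w/\partial x_N} = \lambda_1^\infty\, w^{p-1}$ on $\R^{N-1}$, rules out $w$ vanishing on the interface. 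Thus $w =: w_1^\infty > 0$.

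The main obstacle is the loss of compactness caused by the translation invariance in $x'$, and the crux is excluding dichotomy: here the strict subadditivity of $S$ is immediate from its homogeneity and $p>2$, but it must be coupled with the standard — if somewhat technical — cutoff estimates showing the energy of the split pieces is almost additive. The trace version of the vanishing lemma also needs a little care, being slightly less routine than its bulk analogue, and the strict positivity of $w$ at the interface is the one place where the jump condition, rather than just the equation, is used.
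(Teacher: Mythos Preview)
Your argument is correct, but it follows a genuinely different route from the paper. You work directly with a minimizing sequence and invoke Lions' original trichotomy (vanishing/dichotomy/compactness) for the trace measures $|u_n|^p\,dx'$, excluding the first two via a trace vanishing lemma and the strict subadditivity $S(\alpha)=\alpha^{2/p}\lambda_1^\infty$. The paper instead takes a \emph{critical} sequence for $\restr{J^\infty}{\M}$ at level $\lambda_1^\infty$ and applies the Tintarev--Fieseler profile decomposition (Proposition~\ref{Proposition 3.1} together with Lemma~\ref{Lemma 3.3}): this produces finitely many profiles $w^{(n)}$ with $\sum_n I(w^{(n)})=1$ and $\sum_n J^\infty(w^{(n)})=\lambda_1^\infty$, and the single numerical observation that $\sum t_n=1$, $\sum t_n^{2/p}\le 1$ forces exactly one nonzero $t_n$ disposes of vanishing and dichotomy simultaneously. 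What you gain is an elementary, self-contained argument that needs only a minimizing sequence and avoids the profile-decomposition machinery; what the paper gains is brevity---no cutoff estimates, no separate treatment of the trace vanishing lemma---at the price of quoting a stronger abstract result and needing a Palais--Smale-type sequence rather than a bare minimizing one. The positivity step (strong maximum principle in each half-space, Hopf lemma at the interface to exclude zeros on $\R^{N-1}$) is identical in both proofs.
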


For the nonautonomous problem, $\sqrt{J(\cdot)}$ is an equivalent norm on $H^1(\R^N)$ since $V \in L^\infty(\R^N)$ and $\inf V > 0$, so
\[
\lambda_1 := \inf_{u \in \M}\, J(u) > 0.
\]
By the invariance of $J^\infty$ with respect to the group $D$ of $(N-1)$-dimensional shifts $u \mapsto u(\cdot - y), \linebreak y \in \R^{N-1}$, we have $\lambda_1 \le \lambda_1^\infty$, and we will show that $\lambda_1$ is attained if the inequality is strict.

\begin{theorem} \label{Theorem 1.5}
Assume \eqref{1.12} and $p \in (2,\bar{2})$. If $\lambda_1 < \lambda_1^\infty$, then $\lambda_1$ is attained at a function $w_1 > 0$.
\end{theorem}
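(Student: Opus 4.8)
The plan is to run a concentration--compactness argument in the manner of Lions. Let $(u_n) \subset \M$ be a minimizing sequence for $\lambda_1$, so that $I(u_n) = 1$ and $J(u_n) \to \lambda_1$. Since $\sqrt{J(\cdot)}$ is an equivalent norm, $(u_n)$ is bounded in $H^1(\R^N)$; passing to a subsequence, $u_n \rightharpoonup u$ in $H^1(\R^N)$, $u_n \to u$ a.e., and, by the local compactness of the Sobolev trace imbedding (valid since $p < \bar{2}$), $u_n \to u$ in $L^p_{\mathrm{loc}}(\R^{N-1})$. The goal is to upgrade this to $u_n \to u$ in $L^p(\R^{N-1})$: then $I(u) = 1$, i.e.\ $u \in \M$, and, $J$ being weakly lower semicontinuous, $J(u) \le \liminf J(u_n) = \lambda_1 \le J(u)$, so $u$ attains $\lambda_1$. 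I first record the scaling that drives the argument: if $I(v) = t > 0$, then $v/t^{1/p} \in \M$, whence $J(v) = t^{2/p}\, J(v/t^{1/p}) \ge \lambda_1\, t^{2/p}$ and likewise $J^\infty(v) \ge \lambda_1^\infty\, t^{2/p}$. Because $p > 2$, the function $t \mapsto t^{2/p}$ is strictly concave on $[0,1]$ with value $0$ at $t = 0$, so $\alpha^{2/p} + (1 - \alpha)^{2/p} > 1$ for every $\alpha \in (0,1)$.

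Apply the concentration--compactness principle to the probability measures $|u_n|^p\, dx'$ on $\R^{N-1}$. \emph{Vanishing} is excluded: if $\sup_{y \in \R^{N-1}} \int_{B_r(y) \cap \R^{N-1}} |u_n|^p \to 0$ for every $r > 0$, then a trace version of Lions' lemma forces $u_n \to 0$ in $L^p(\R^{N-1})$, contradicting $I(u_n) = 1$. \emph{Dichotomy} is excluded too: a splitting $u_n \approx u_n^{(1)} + u_n^{(2)}$ with $I(u_n^{(1)}) \to \alpha$, $I(u_n^{(2)}) \to 1 - \alpha$ for some $\alpha \in (0,1)$ and with the traces of $u_n^{(1)}, u_n^{(2)}$ supported at mutually diverging distance, would give, after the cross terms in $J$ are absorbed into an $\o(1)$, the estimate $\lambda_1 + \o(1) = J(u_n) \ge J(u_n^{(1)}) + J(u_n^{(2)}) - \o(1) \ge \lambda_1\big(\alpha^{2/p} + (1 - \alpha)^{2/p}\big) - \o(1)$, contradicting $\alpha^{2/p} + (1 - \alpha)^{2/p} > 1$ (recall $\lambda_1 > 0$). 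Hence \emph{compactness} holds: there are $y_n \in \R^{N-1}$ such that for every $\varepsilon > 0$ there is $R = R(\varepsilon)$ with $\int_{B_R(y_n) \cap \R^{N-1}} |u_n|^p \ge 1 - \varepsilon$ for all $n$.

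It remains to analyse $(y_n)$; passing to a subsequence, either $(y_n)$ is bounded or $|y_n| \to \infty$. If $(y_n)$ is bounded, the tightness just obtained, being uniform over a bounded region, combines with $u_n \to u$ in $L^p_{\mathrm{loc}}(\R^{N-1})$ to give $u_n \to u$ in $L^p(\R^{N-1})$, and $u$ attains $\lambda_1$ as noted above. If $|y_n| \to \infty$, put $\hat u_n(x) = u_n(x + (y_n, 0))$; then $\hat u_n$ is bounded in $H^1(\R^N)$ with $I(\hat u_n) = 1$ and $\int_{B_R(0) \cap \R^{N-1}} |\hat u_n|^p \ge 1 - \varepsilon$, so, passing to a subsequence, $\hat u_n \rightharpoonup \hat u$, and the same tightness argument yields $\hat u_n \to \hat u$ in $L^p(\R^{N-1})$, whence $\hat u \in \M$. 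Writing $J(u_n) = \int_{\R^N} |\nabla \hat u_n|^2 + V_n\, \hat u_n^2$ with $V_n(x) := V(x + (y_n, 0)) \to V^\infty$ pointwise and boundedly on $\R^N$ by \eqref{1.12} (since $|x' + y_n| \to \infty$), weak lower semicontinuity of the gradient term, together with $\hat u_n \to \hat u$ in $L^2_{\mathrm{loc}}(\R^N)$ (Rellich) and dominated convergence, give $\liminf_n \int_{\R^N} V_n\, \hat u_n^2 \ge \int_{\R^N} V^\infty\, \hat u^2$, hence $\lambda_1 = \lim_n J(u_n) \ge \int_{\R^N} |\nabla \hat u|^2 + V^\infty\, \hat u^2 = J^\infty(\hat u) \ge \lambda_1^\infty$, contradicting the hypothesis $\lambda_1 < \lambda_1^\infty$. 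Therefore $(y_n)$ is bounded and $\lambda_1$ is attained, say at $u$. Finally $|u|$ also attains $\lambda_1$ (as $I(|u|) = I(u)$ and $J(|u|) = J(u)$), hence is an eigenfunction of \eqref{1.11} with eigenvalue $\lambda_1 > 0$; since $|u| \ge 0$, $|u| \not\equiv 0$, and $-\Delta |u| + V(x)\, |u| = 0$ in each open half-space with $V \in L^\infty$, elliptic regularity and the strong maximum principle force $|u| > 0$ off $\R^{N-1}$, and Hopf's lemma on each side of $\R^{N-1}$ together with the jump condition rules out a zero on $\R^{N-1}$; thus $w_1 := |u| > 0$.

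I expect the main obstacle to be the bookkeeping of the energy $J$ in this non-autonomous problem: one must show that trace mass which escapes to infinity necessarily escapes in the $x'$-directions, so that it feels only the limiting potential $V^\infty$ --- this is precisely where \eqref{1.12} enters and why $\lambda_1 < \lambda_1^\infty$ is the sharp condition. Making this rigorous --- a trace analogue of Lions' vanishing lemma, the dichotomy cut-off with controlled cross terms, and the fact that a minimizing sequence cannot shed mass in the $\pm x_N$-directions --- is the technical core; the remaining steps are routine.
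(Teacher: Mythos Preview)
Your proposal is correct in outline and reaches the same conclusion, but the route is genuinely different from the paper's.

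The paper does not run the Lions trichotomy on the trace measures. Instead it takes a \emph{critical} sequence for $\restr{J}{\M}$ at level $\lambda_1$ (so Ekeland is implicit) and applies a profile decomposition (Proposition~3.1, sharpened in Lemma~3.2): on a subsequence one obtains finitely many profiles $w^{(1)},\dots,w^{(m)}$ with $w^{(1)}$ the weak limit and $w^{(n)}$, $n\ge 2$, weak limits after shifts $y_k^{(n)}\in\R^{N-1}$ with $|y_k^{(n)}|\to\infty$. Because the sequence is critical, each profile solves the corresponding Euler equation, and one gets the exact identities $\sum_n I(w^{(n)})=1$ and $J(w^{(1)})+\sum_{n\ge 2}J^\infty(w^{(n)})=\lambda_1$. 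Writing $t_n=I(w^{(n)})$ and using $J(w^{(1)})\ge\lambda_1 t_1^{2/p}$, $J^\infty(w^{(n)})\ge\lambda_1^\infty t_n^{2/p}$ for $n\ge 2$, the hypothesis $\lambda_1<\lambda_1^\infty$ forces $m=1$, and $w^{(1)}$ is the minimizer; positivity is then argued exactly as you do.

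Your approach trades the profile-decomposition machinery for the classical trichotomy and hand-made cutoffs, and works with a plain minimizing sequence rather than a critical one. The gain is that you avoid the black box of Proposition~3.1 and the need for Ekeland; the cost is that you must supply by hand the trace analogue of Lions' vanishing lemma and control the cross terms in the dichotomy splitting (cutoffs in $x'$ applied to functions living in $\R^N$), which you correctly flag as the technical core. The paper's approach packages all of this into the profile decomposition and the exact energy balance, yielding a shorter and more bookkeeping-free argument once Lemma~3.2 is in place. Both arguments exploit the same strict subadditivity $\alpha^{2/p}+(1-\alpha)^{2/p}>1$ and the same use of \eqref{1.12} to convert escaping mass into $J^\infty$-energy.
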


The inequality $\lambda_1 < \lambda_1^\infty$ holds if $V \le V^\infty$, with the strict inequality on a set of positive measure. Indeed, in that case
\[
\lambda_1 \le J(w_1^\infty) < J^\infty(w_1^\infty) = \lambda_1^\infty
\]
since $w_1^\infty > 0$ a.e. So we have the following corollary.

\begin{corollary}
Assume \eqref{1.12} and $p \in (2,\bar{2})$. If $V(x) \le V^\infty$ for all $x \in \R^N$ and the strict inequality holds on a set of positive measure, then $\lambda_1$ is attained at a function $w_1 > 0$.
\end{corollary}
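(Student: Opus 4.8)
The plan is to reduce the corollary directly to Theorem~\ref{Theorem 1.5} by verifying its hypothesis $\lambda_1 < \lambda_1^\infty$. First I would invoke Theorem~\ref{Theorem 1.4} to produce a minimizer $w_1^\infty \in \M$ of $J^\infty$ over $\M$ with $w_1^\infty > 0$; in particular $w_1^\infty > 0$ almost everywhere on $\R^N$, hence $(w_1^\infty)^2 > 0$ a.e.

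Next, since $w_1^\infty \in \M$ it is an admissible competitor for $\lambda_1 = \inf_{u \in \M} J(u)$, and using the hypotheses $V(x) \le V^\infty$ pointwise together with $V(x) < V^\infty$ on a set $E$ of positive Lebesgue measure, I would estimate
\[
\lambda_1 \le J(w_1^\infty) = \int_{\R^N} |\nabla w_1^\infty|^2 + V(x)\,(w_1^\infty)^2 < \int_{\R^N} |\nabla w_1^\infty|^2 + V^\infty\,(w_1^\infty)^2 = J^\infty(w_1^\infty) = \lambda_1^\infty ,
\]
where the strict inequality follows because $\int_E \left(V^\infty - V(x)\right)(w_1^\infty)^2 > 0$: the integrand is strictly positive a.e.\ on $E$ and $|E| > 0$. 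This yields $\lambda_1 < \lambda_1^\infty$.

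Finally, since assumption \eqref{1.12} is among the hypotheses, Theorem~\ref{Theorem 1.5} applies and gives that $\lambda_1$ is attained at a function $w_1 > 0$, completing the argument. There is essentially no obstacle here: the one substantive input is the strict positivity of $w_1^\infty$ furnished by Theorem~\ref{Theorem 1.4}, which is precisely what upgrades the pointwise comparison $V \le V^\infty$ to the strict comparison of energies; the remainder is just the minimizing property of $w_1^\infty$ and a citation of Theorem~\ref{Theorem 1.5}.
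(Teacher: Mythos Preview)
Your argument is correct and matches the paper's own reasoning essentially verbatim: use $w_1^\infty$ from Theorem~\ref{Theorem 1.4} as a competitor to get $\lambda_1 \le J(w_1^\infty) < J^\infty(w_1^\infty) = \lambda_1^\infty$, the strict inequality coming from $w_1^\infty > 0$ a.e.\ and $V < V^\infty$ on a set of positive measure, and then invoke Theorem~\ref{Theorem 1.5}.
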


The main difficulty here is the lack of compactness inherent in this problem. This lack of compactness originates from the invariance of $\R^N$ and $\R^{N-1}$ under the action of the noncompact group $D$, and manifests itself in the noncompactness of the Sobolev trace imbedding \linebreak $H^1(\R^N) \hookrightarrow L^p(\R^{N-1})$, which in turn implies that the manifold $\M$ is not weakly closed in $H^1(\R^N)$. Our proofs will use the concentration compactness principle of Lions \cite{MR778970,MR879032}, expressed as a suitable profile decomposition for minimizing sequences, to overcome this difficulty.

\section{Proofs of Theorems \ref{Theorem 1.1} -- \ref{Theorem 1.3}}

\subsection{Proof of Theorem \ref{Theorem 1.1}}

Consider the linear interface problem
\[
\left\{\begin{aligned}
\Delta u & = 0 && \text{in } \Omega \setminus \Gamma\\[10pt]
\jump{u} & = 0, && - \jump{\frac{\partial u}{\partial \nu}} = f(x) && \text{on } \Gamma\\[10pt]
u & = 0 && \text{on } \bdry{\Omega},
\end{aligned}\right.
\]
where $f \in L^2(\Gamma)$. This problem has a solution $u$, obtained by minimizing the associated functional
\[
\Psi(u) = \half \int_\Omega |\nabla u|^2 - \int_\Gamma f(x)\, u, \quad u \in H^1_0(\Omega),
\]
and it is unique by the maximum principle. Testing $\Psi'(u) = 0$ with $u$ and using the H\"{o}lder inequality gives $\norm{u}^2 \le \norm[L^2(\Gamma)]{f} \norm[L^2(\Gamma)]{u}$. So the linear map $L^2(\Gamma) \to H^1_0(\Omega),\, f \mapsto u$ is bounded by the boundedness of the trace imbedding $H^1_0(\Omega) \hookrightarrow L^2(\Gamma)$, and hence the solution operator $S : L^2(\Gamma) \to L^2(\Gamma),\, Sf = u$ is compact by the compactness of the same imbedding. Thus, the spectrum $\sigma(\Gamma)$ of the self-adjoint operator $S^{-1}$ consists of isolated eigenvalues $\mu_l \nearrow \infty$, of multiplicities $d_l < \infty$.

Since $\mu \notin \sigma(\Gamma)$, zero is a nondegenerate critical point of the asymptotic functional
\[
\Phi_0(u) = \half \int_\Omega |\nabla u|^2 - \frac{\mu}{2} \int_\Gamma u^2, \quad u \in H^1_0(\Omega),
\]
of Morse index
\[
m_0 = \sum_{\mu_l < \mu}\, d_l.
\]
Then it follows from a standard homotopy argument that
\[
C_q(\Phi,0) \isom C_q(\Phi_0,0) = \delta_{q m_0}\, \G,
\]
where $\G$ is the coefficient group.

Recall that $\Phi$ satisfies the Palais-Smale compactness condition \PS{} if every sequence $\seq{u_j}$ in $H^1_0(\Omega)$ such that $\Phi(u_j)$ is bounded and $\Phi'(u_j) \to 0$, called a \PS{} sequence for $\Phi$, has a convergent subsequence.

\begin{lemma} \label{Lemma 2.1}
If \eqref{1.2} and \eqref{1.4} hold, then
\begin{enumroman}
\item \label{Lemma 2.1.i} $\Phi$ satisfies the {\em \PS{}} condition,
\item \label{Lemma 2.1.ii} $C_q(\Phi,\infty) = 0$ for all $q$.
\end{enumroman}
\end{lemma}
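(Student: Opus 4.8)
The plan is to establish the two assertions separately, both using the Ambrosetti--Rabinowitz superlinearity condition \eqref{1.4} in an essential way, together with the subcritical growth \eqref{1.2} that makes the relevant trace imbedding compact.

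\emph{Part \ref{Lemma 2.1.i}.} Let $\seq{u_j}$ be a \PS{} sequence, so that $\Phi(u_j) \to c$ for some $c \in \R$ and $\Phi'(u_j) \to 0$ in $H^{-1}(\Omega)$. First I would show $\seq{u_j}$ is bounded. Compute
\[
\Phi(u_j) - \frac{1}{\theta}\, \dualp{\Phi'(u_j)}{u_j} = \left(\half - \frac{1}{\theta}\right) \norm{u_j}^2 + \int_\Gamma \left(\frac{1}{\theta}\, g(x,u_j)\, u_j - G(x,u_j)\right).
\]
By \eqref{1.4}, the integrand is bounded below by a constant on the set where $|u_j|$ is large, and since $g$ and $G$ are continuous it is bounded below everywhere up to a constant depending only on that threshold; hence the integral is $\ge -C$. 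The left-hand side is $\le c + 1 + \o(1)\,\norm{u_j}$, so $\left(\half - \frac{1}{\theta}\right)\norm{u_j}^2 \le c + 1 + \o(1)\,\norm{u_j} + C$, and since $\theta > 2$ this forces $\seq{u_j}$ to be bounded in $H^1_0(\Omega)$. Passing to a subsequence, $u_j \rightharpoonup u$ weakly in $H^1_0(\Omega)$; by \eqref{1.2} with $q < \bar{2}$ the trace imbedding $H^1_0(\Omega) \hookrightarrow L^q(\Gamma)$ is compact, so $u_j \to u$ in $L^q(\Gamma)$ and the Nemytskii map $v \mapsto g(\cdot,v)$ is continuous from $L^q(\Gamma)$ into $L^{q'}(\Gamma)$; therefore $\int_\Gamma g(x,u_j)(u_j - u) \to 0$. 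Writing out $\dualp{\Phi'(u_j)}{u_j - u} \to 0$ then gives $\int_\Omega \nabla u_j \cdot \nabla(u_j - u) \to 0$, and combined with the weak convergence this yields $\norm{u_j - u} \to 0$, proving the \PS{} condition.

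\emph{Part \ref{Lemma 2.1.ii}.} Here I would show $C_q(\Phi,\infty) = 0$ for all $q$ by proving that a sublevel set $\Phi^a$ with $a \ll 0$ is contractible, indeed deformation retracts onto a point, using the superlinearity to make $\Phi$ strictly negative and steeply decreasing along rays. Fix $u \in H^1_0(\Omega)$ with $u \ne 0$ and consider $t \mapsto \Phi(tu)$ for $t > 0$. By \eqref{1.5},
\[
\Phi(tu) \le \frac{t^2}{2}\, \norm{u}^2 - c\, t^\theta \int_\Gamma |u|^\theta + C\, |\Gamma|,
\]
and since $\theta > 2$ and the trace of $u$ on $\Gamma$ cannot vanish identically (else $u$ would be a nonzero harmonic function in each $\Omega_i$ with zero boundary and interface traces, hence $u = 0$), this $\to -\infty$ as $t \to \infty$; moreover one checks $\frac{d}{dt}\Phi(tu) < 0$ once $t$ is large. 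The standard argument (as in Wang, or Chang \cite{MR94e:58023}, or Perera--Schechter) then shows that on the unit sphere $S$ of $H^1_0(\Omega)$ there is, for $a$ sufficiently negative, a unique $T(u) > 0$ with $\Phi(T(u)\, u) = a$, that $T : S \to (0,\infty)$ is continuous, and that $\Phi(tu) < a$ for $t > T(u)$; this gives a homeomorphism of $\Phi^a$ onto the exterior region $\bgset{tu : u \in S,\, t \ge T(u)}$, which is contractible (deformation retract along rays to infinity, or first radially and then contract $S$ — but more simply, the set $\set{u : \Phi(u) \le a}$ is star-shaped with respect to infinity in the radial sense, hence contractible). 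Since $\Phi$ has no critical values below $a$, $C_q(\Phi,\infty) = H_q(H^1_0(\Omega), \Phi^a) = 0$ because both $H^1_0(\Omega)$ and $\Phi^a$ are contractible.

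\emph{Main obstacle.} The routine-looking but genuinely delicate point is the deformation/homeomorphism argument in Part \ref{Lemma 2.1.ii}: one must verify that the level $\Phi(tu) = a$ is crossed exactly once and transversally for every ray, uniformly enough in $u \in S$ to get continuity of $T$ and a well-defined retraction. The condition \eqref{1.4} only controls $G$ for $|t|$ large, so the monotonicity $\frac{d}{dt}\Phi(tu) < 0$ need not hold for small or intermediate $t$; the fix is to choose $a$ negative enough that $\Phi(tu) = a$ only for $t$ in the range where the superlinear term dominates, which requires a lower bound on $\int_\Gamma |u|^\theta$ over $S$ — and here is where the trace inequality and the fact that the $\Gamma$-trace is an injective bounded operator on $H^1_0(\Omega)$ (modulo the harmonic-extension characterization above) must be invoked carefully. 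Everything else is bookkeeping with \eqref{1.2}--\eqref{1.5} and the compactness of the subcritical trace imbedding.
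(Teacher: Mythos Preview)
Your Part \ref{Lemma 2.1.i} is correct and matches the paper's argument (the paper writes the same identity scaled by $\theta$ and then invokes the ``standard argument'' you spelled out).

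Part \ref{Lemma 2.1.ii}, however, has a genuine gap. Your key claim that ``the trace of $u$ on $\Gamma$ cannot vanish identically (else $u$ would be a nonzero harmonic function\ldots)'' is false: a generic $u\in H^1_0(\Omega)$ is not harmonic, and the trace-on-$\Gamma$ operator has the infinite-dimensional kernel $V=H^1_0(\Omega_1)\oplus H^1_0(\Omega_2)$. For any nonzero $u\in V$ one has $\Phi(tu)=\tfrac{t^2}{2}\norm{u}^2\to +\infty$, so no $T(u)$ exists along these rays and your map $T:S\to(0,\infty)$ is undefined on $S\cap V$. The hoped-for lower bound on $\int_\Gamma |u|^\theta$ over $S$ does not exist, and your ``main obstacle'' paragraph anticipates exactly the point where the argument breaks.

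The paper fixes this by working with the orthogonal decomposition $H^1_0(\Omega)=V\oplus W$, $W=V^\perp$, and restricting the ray argument to $u\in S\setminus V$. Two further ideas are needed. First, since $\int_\Gamma |w|^\theta$ is not bounded away from zero even on $S\setminus V$, uniqueness and transversality of the crossing are obtained not from a uniform estimate but from the differential inequality
\[
\frac{d}{d\tau}\,\Phi(\tau u)\le\frac{2}{\tau}\big(\Phi(\tau u)-a_0\big),
\]
valid for some $a_0\le 0$ by \eqref{1.2} and \eqref{1.4}; choosing $a<a_0$ then forces $\tfrac{d}{d\tau}\Phi(\tau u)<0$ whenever $\Phi(\tau u)\le a$, so each ray in $S\setminus V$ meets $\Phi^{-1}(a)$ exactly once, with $u\mapsto\tau_u$ of class $C^1$ by the implicit function theorem. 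Second, the deformation retracts $H^1_0(\Omega)\setminus V$ (not all of $H^1_0(\Omega)\setminus\{0\}$) onto $\Phi^a$, and one concludes via
\[
C_q(\Phi,\infty)=H_q\big(H^1_0(\Omega),\Phi^a\big)\isom H_q\big(H^1_0(\Omega),H^1_0(\Omega)\setminus V\big)\isom H_q\big(W,W\setminus\{0\}\big)=0,
\]
using that $W$ is infinite dimensional.
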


\begin{proof}
\ref{Lemma 2.1.i} Let $\seq{u_j}$ be a \PS{} sequence for $\Phi$. We have
\[
\left(\frac{\theta}{2} - 1\right)\! \norm{u_j}^2 = \theta\, \Phi(u_j) - \dualp{\Phi'(u_j)}{u_j} + \int_\Gamma \theta\, G(x,u_j) - u_j\, g(x,u_j) \le \o(\norm{u_j}) + \O(1)
\]
by \eqref{1.2} and \eqref{1.4}, so $\norm{u_j}$ is bounded and then a standard argument gives a convergent subsequence.

\ref{Lemma 2.1.ii} We have the orthogonal decomposition $H^1_0(\Omega) = V \oplus W,\, u = v + w$, where $V = \linebreak H^1_0(\Omega_1) \oplus H^1_0(\Omega_2)$ and $W = V^\perp$. For $v \in V$,
\[
\Phi(v) = \half \int_\Omega |\nabla v|^2 \ge 0.
\]
Denoting by $S = \bgset{u \in H^1_0(\Omega) : \norm{u} = 1}$ the unit sphere in $H^1_0(\Omega)$, for $u \in S \setminus V$ and $\tau > 0$,
\begin{equation} \label{2.1}
\Phi(\tau u) = \frac{\tau^2}{2} - \int_\Gamma G(x,\tau w) \le \frac{\tau^2}{2} - c\, \tau^\theta \int_\Gamma |w|^\theta + C \to - \infty \as \tau \to \infty
\end{equation}
by \eqref{1.5}, and
\begin{equation} \label{2.2}
\frac{d}{d \tau}\, \big(\Phi(\tau u)\big) = \tau - \int_\Gamma w\, g(x,\tau w) = \frac{2}{\tau} \left(\Phi(\tau u) - \int_\Gamma \frac{\tau w}{2}\, g(x,\tau w) - G(x,\tau w)\right) \le \frac{2}{\tau}\, \big(\Phi(\tau u) - a_0\big)
\end{equation}
for some $a_0 \le 0$ by \eqref{1.2} and \eqref{1.4}. Fix $a < a_0$. Then $\Phi$ has no critical points in $\Phi^a$.

We have $\Phi(\tau u) \le a$ for all sufficiently large $\tau$ by \eqref{2.1}, and
\[
\Phi(\tau u) \le a \implies \frac{d}{d \tau}\, \big(\Phi(\tau u)\big) < 0
\]
by \eqref{2.2}, so there is a unique $\tau_u > 0$ such that
\[
\tau < \resp{=, >} \tau_u \implies \Phi(\tau u) > \resp{=, <} a
\]
and the map $S \setminus V \to (0,\infty),\, u \mapsto \tau_u$ is $C^1$ by the implicit function theorem. Then
\[
\Phi^a = \bgset{\tau u : u \in S \setminus V,\, \tau \ge \tau_u},
\]
and $H^1_0(\Omega) \setminus V$ radially deformation retracts to $\Phi^a$ via
\[
(H^1_0(\Omega) \setminus V) \times [0,1] \to H^1_0(\Omega) \setminus V, \quad (u,t) \mapsto \begin{cases}
(1 - t)\, u + t\, \tau_{\widehat{u}}\, \widehat{u}, & u \in (H^1_0(\Omega) \setminus V) \setminus \Phi^a\\[10pt]
u, & u \in \Phi^a,
\end{cases}
\]
where $\widehat{u} = u/\norm{u}$ is the projection of $u \ne 0$ on $S$.

Thus,
\[
C_q(\Phi,\infty) = H_q(H^1_0(\Omega),\Phi^a) \isom H_q(H^1_0(\Omega),H^1_0(\Omega) \setminus V) \isom H_q(W,W \setminus \set{0}) = 0 \quad \forall q
\]
since $W$ is infinite dimensional.
\end{proof}

Theorem \ref{Theorem 1.1} now follows since $C_{m_0}(\Phi,0) \not\isom C_{m_0}(\Phi,\infty)$.

\subsection{Proof of Theorem \ref{Theorem 1.2}}

Let
\[
I(u) = \int_\Omega |\nabla u|^p, \quad J(u) = \int_\Gamma |u|^p, \quad u \in W^{1,\, p}_0(\Omega).
\]
Then the eigenvalues of problem \eqref{1.8} coincide with the critical values of the functional $I$ on the manifold
\[
\M = \bgset{u \in W^{1,\, p}_0(\Omega) : J(u) = 1}
\]
by the Lagrange multiplier rule. Denote by $\A$ the class of closed symmetric subsets of $\M$ and by $i(A)$ the Yang index of $A \in \A$ (see Yang \cite{MR16:502d,MR17:289e}). Then
\[
\mu_l := \inf_{\substack{A \in \A\\[1pt]
i(A) \ge l-1}} \sup_{u \in A}\, I(u), \quad l \ge 1
\]
is an increasing and unbounded sequence of eigenvalues. Moreover, zero is an isolated critical point of
\[
\Phi_0(u) = \frac{1}{p} \int_\Omega |\nabla u|^p - \frac{\mu}{p} \int_\Gamma |u|^p, \quad u \in W^{1,\, p}_0(\Omega)
\]
since $\mu \notin \sigma_p(\Gamma)$, and $C_q(\Phi_0,0) = \delta_{q0}\, \G$ if $\mu < \mu_1$ and $C_l(\Phi_0,0) \ne 0$ if $\mu_l < \mu < \mu_{l+1}$ (see Perera \cite{MR1998432} and Perera et al. \cite{MR2640827}). Since $C_q(\Phi,0) \isom C_q(\Phi_0,0)$, Theorem \ref{Theorem 1.2} then follows as before once we prove the following lemma.

\begin{lemma}
If \eqref{1.2} with $q \in (1,\bar{p})$ and \eqref{1.4} with $\theta > p$ hold, then
\begin{enumroman}
\item \label{Lemma 2.2.i} $\Phi$ satisfies the {\em \PS{}} condition,
\item \label{Lemma 2.2.ii} $C_q(\Phi,\infty) = 0$ for all $q$.
\end{enumroman}
\end{lemma}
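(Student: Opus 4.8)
The plan is to prove this lemma in essentially the same way as Lemma~\ref{Lemma 2.1}, adapting the arguments from the quadratic case to the $p$-homogeneous setting. For part~\ref{Lemma 2.2.i}, let $\seq{u_j}$ be a \PS{} sequence for the functional $\Phi(u) = \frac{1}{p} \int_\Omega |\nabla u|^p - \int_\Gamma G(x,u)$ on $W^{1,\, p}_0(\Omega)$. The key identity is the combination
\[
\left(\frac{\theta}{p} - 1\right)\! \norm{u_j}^p = \theta\, \Phi(u_j) - \dualp{\Phi'(u_j)}{u_j} + \int_\Gamma \theta\, G(x,u_j) - u_j\, g(x,u_j),
\]
where the last integral is $\O(1)$ by \eqref{1.2} and \eqref{1.4}, while $\theta\, \Phi(u_j) = \O(1)$ and $\dualp{\Phi'(u_j)}{u_j} = \o(\norm{u_j})$; since $\theta > p$, this forces $\seq{u_j}$ to be bounded. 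Boundedness plus the compactness of the trace imbedding $W^{1,\, p}_0(\Omega) \hookrightarrow L^q(\Gamma)$ (valid since $q < \bar{p}$) then gives a convergent subsequence by the standard $(S_+)$ argument for the $p$-Laplacian: pass to a weakly convergent subsequence $u_j \rightharpoonup u$, write $\dualp{\Phi'(u_j)}{u_j - u} \to 0$, use the compact trace imbedding to kill the $\int_\Gamma g(x,u_j)(u_j - u)$ term, and conclude $\int_\Omega |\nabla u_j|^{p-2} \nabla u_j \cdot \nabla(u_j - u) \to 0$, which yields $u_j \to u$ strongly by the $(S_+)$ property of $-\Delta_p$.

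For part~\ref{Lemma 2.2.ii}, I would reuse the orthogonal-decomposition scheme of Lemma~\ref{Lemma 2.1.ii}, with $V$ now taken to be the closed subspace $W^{1,\, p}_0(\Omega_1) \oplus W^{1,\, p}_0(\Omega_2)$ of $W^{1,\, p}_0(\Omega)$, consisting of functions whose trace on $\Gamma$ vanishes, and $W$ a (topological, not necessarily orthogonal) complement. On $V$ one has $\Phi(v) = \frac{1}{p} \int_\Omega |\nabla v|^p \ge 0$ since the trace term drops out. For $u$ in the unit sphere $S$ with $u \notin V$, the trace $w$ of $u$ on $\Gamma$ is nonzero, so \eqref{1.5} gives
\[
\Phi(\tau u) = \frac{\tau^p}{p} - \int_\Gamma G(x,\tau w) \le \frac{\tau^p}{p} - c\, \tau^\theta \int_\Gamma |w|^\theta + C \to - \infty \as \tau \to \infty
\]
because $\theta > p$, and differentiating,
\[
\frac{d}{d\tau}\big(\Phi(\tau u)\big) = \tau^{p-1} - \int_\Gamma w\, g(x,\tau w) = \frac{p}{\tau}\left(\Phi(\tau u) - \int_\Gamma \frac{\tau w}{p}\, g(x,\tau w) - G(x,\tau w)\right) \le \frac{p}{\tau}\big(\Phi(\tau u) - a_0\big)
\]
for some $a_0 \le 0$, again using \eqref{1.2} and \eqref{1.4} to bound the correction integral. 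Fixing $a < a_0$, the flow-line argument shows each ray through $S \setminus V$ crosses the level $a$ exactly once at a $C^1$-varying parameter $\tau_u$, so $W^{1,\, p}_0(\Omega) \setminus V$ radially deformation retracts onto $\Phi^a$, and then
\[
C_q(\Phi,\infty) \isom H_q\big(W^{1,\, p}_0(\Omega), W^{1,\, p}_0(\Omega) \setminus V\big) \isom H_q(W, W \setminus \set{0}) = 0
\]
for all $q$, since $W \cong W^{1,\, p}_0(\Omega)/V$ is infinite dimensional (the trace map $u \mapsto \restr{u}{\Gamma}$ has infinite-dimensional range).

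The one genuinely new point to check carefully is that $V$ still admits a complement $W$ for which the pair $(W^{1,\, p}_0(\Omega) \setminus V) \simeq W \setminus \set{0}$, because in a Banach (non-Hilbert) space one does not automatically have an orthogonal complement. I would observe that $V$ is a closed subspace of finite codimension's dual flavor is false here — rather $V$ has infinite codimension — but it is still complemented whenever the quotient $W^{1,\, p}_0(\Omega)/V$ embeds back as a closed subspace; concretely one can take $W$ to be the closure of the linear span of a sequence of functions with prescribed linearly independent traces, or simply work with the quotient directly, noting that the homotopy type of $X \setminus V$ for $V$ a closed linear subspace of a Banach space $X$ depends only on $\dim(X/V)$, and is that of an infinite-dimensional punctured space, hence contractible and with trivial relative homology against $X$. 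This is the same topological fact used in the Hilbert case; the only adjustment is phrasing it without reference to orthogonality. Everything else transfers verbatim with $2$ replaced by $p$.
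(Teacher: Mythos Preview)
Your proof is correct and follows the same overall scheme as the paper: the \PS{} estimate in \ref{Lemma 2.2.i} is identical (you even spell out the $(S_+)$ argument the paper leaves as ``standard''), and the radial deformation of $W^{1,\,p}_0(\Omega)\setminus V$ onto $\Phi^a$ in \ref{Lemma 2.2.ii} is the same with $2$ replaced by $p$. The one substantive difference is how the complement $W$ is obtained. You treat this abstractly and somewhat tentatively, correctly flagging that complementation in a non-Hilbert Banach space is not automatic and offering quotient-based or general topological workarounds. The paper instead gives an explicit complement: it takes
\[
W = \bgset{u \in W^{1,\, p}_0(\Omega) : \Delta u = 0 \text{ in } \Omega \setminus \Gamma},
\]
i.e., those $u$ with $\int_\Omega \nabla u \cdot \nabla v = 0$ for all $v \in V$. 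Note this uses the \emph{linear} Laplacian, not $\Delta_p$; the direct sum $W^{1,\,p}_0(\Omega) = V \oplus W$ then amounts to $W^{1,p}$-solvability of the Dirichlet problem for $\Delta$ on the $C^1$ subdomains $\Omega_i$ (the harmonic extension of the trace $u|_\Gamma \in W^{1-1/p,\,p}(\Gamma)$ lies in $W^{1,p}$). This concrete choice sidesteps your appeal to general Banach-space topology and keeps the argument exactly parallel to the Hilbert case; your abstract route would also work but needs more justification than you give.
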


\begin{proof}
\ref{Lemma 2.2.i} If $\seq{u_j}$ is a \PS{} sequence for $\Phi$, then
\[
\left(\frac{\theta}{p} - 1\right)\! \norm{u_j}^p = \theta\, \Phi(u_j) - \dualp{\Phi'(u_j)}{u_j} + \int_\Gamma \theta\, G(x,u_j) - u_j\, g(x,u_j) \le \o(\norm{u_j}) + \O(1)
\]
by \eqref{1.2} and \eqref{1.4}, so $\norm{u_j}$ is bounded and a standard argument gives a convergent subsequence.

\ref{Lemma 2.2.ii} The argument in the proof of Lemma \ref{Lemma 2.1} \ref{Lemma 2.1.ii} can be repeated with the direct sum decomposition $W^{1,\, p}_0(\Omega) = V \oplus W$, where $V = W^{1,\, p}_0(\Omega_1) \oplus W^{1,\, p}_0(\Omega_2)$ and $W = \bgset{u \in W^{1,\, p}_0(\Omega) : \Delta u = 0 \text{ in } \Omega \setminus \Gamma}$, i.e., $u \in W^{1,\, p}_0(\Omega)$ belongs to $W$ if and only if
\[
\int_\Omega \nabla u \cdot \nabla v = 0 \quad \forall v \in V. \QED
\]
\end{proof}

\subsection{Proof of Theorem \ref{Theorem 1.3}}

By the elementary inequality $(a + b)^p \le 2^p (a^p + b^2),\, a \ge 0,\, 0 \le b \le 1$, \eqref{1.2}, and \eqref{1.10},
\[
\Phi(u) \ge \left(\frac{1}{2^p\, p} + \o(1)\right)\! \norm{u}^p \as \norm{u} \to 0,
\]
so zero is a strict local minimizer of $\Phi$ and hence $C_q(\Phi,0) = \delta_{q0}\, \G$. Theorem \ref{Theorem 1.3} now follows from the following lemma as before.

\begin{lemma}
If \eqref{1.2} with $q \in (1,\bar{p})$ and \eqref{1.4} with $\theta > p$ hold, then
\begin{enumroman}
\item \label{Lemma 2.3.i} $\Phi$ satisfies the {\em \PS{}} condition,
\item \label{Lemma 2.3.ii} $C_q(\Phi,\infty) = 0$ for all $q$.
\end{enumroman}
\end{lemma}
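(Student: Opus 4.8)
The plan is to run, for the functional $\Phi$ on the space $X$ of problem \eqref{1.9}, the two–part argument used in Lemma \ref{Lemma 2.1} and in the corresponding lemma for problem \eqref{1.6}, adapting every estimate to the mixed structure of $\Phi$, which combines a $p$-homogeneous Dirichlet integral over $\Omega_1$ with a quadratic one over $\Omega_2$ coupled only through the trace on $\Gamma$. Throughout write $u = (u_1,u_2)$, $A(u) = \int_{\Omega_1} |\nabla u_1|^p$ and $B(u) = \int_{\Omega_2} |\nabla u_2|^2$, so that $\Phi(u) = \frac{1}{p}\, A(u) + \half\, B(u) - \int_\Gamma G(x,u_1)$ and $\dualp{\Phi'(u)}{u} = A(u) + B(u) - \int_\Gamma u_1\, g(x,u_1)$. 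Note that $X$ is reflexive, being a closed subspace of the reflexive space $W^{1,\, p}(\Omega_1) \times H^1(\Omega_2)$.

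For part \ref{Lemma 2.3.i}, given a \PS{} sequence $\seq{u_j}$ I would first combine, using \eqref{1.2} and \eqref{1.4},
\[
\left(\frac{\theta}{p} - 1\right) A(u_j) + \left(\frac{\theta}{2} - 1\right) B(u_j) = \theta\, \Phi(u_j) - \dualp{\Phi'(u_j)}{u_j} + \int_\Gamma \theta\, G(x,u_{1,j}) - u_{1,j}\, g(x,u_{1,j}) \le \o(\norm{u_j}) + \O(1);
\]
since $\theta > p > 2$ both coefficients on the left are positive, so $\norm{u_j} = A(u_j)^{1/p} + B(u_j)^{1/2}$ is bounded. Passing to a subsequence with $u_j \rightharpoonup u$ in $X$, the compactness of the trace imbedding $W^{1,\, p}(\Omega_1) \hookrightarrow L^q(\Gamma)$ for $q < \bar{p}$ gives $u_{1,j} \to u_1$ in $L^q(\Gamma)$ and hence $\int_\Gamma g(x,u_{1,j})\, (u_{1,j} - u_1) \to 0$, so $\dualp{\Phi'(u_j)}{u_j - u} \to 0$ forces
\[
\int_{\Omega_1} |\nabla u_{1,j}|^{p-2}\, \nabla u_{1,j} \cdot \nabla(u_{1,j} - u_1) + \int_{\Omega_2} \nabla u_{2,j} \cdot \nabla(u_{2,j} - u_2) \to 0.
\]
The second integral has nonnegative limit inferior by weak lower semicontinuity of $B$, so the first has nonpositive limit superior, whence the $(S_+)$ property of $- \Delta_p$ on $W^{1,\, p}(\Omega_1)$ yields $u_{1,j} \to u_1$ strongly; then the second integral tends to $0$ as well, forcing $u_{2,j} \to u_2$ strongly in $H^1(\Omega_2)$, so $u_j \to u$ in $X$.

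For part \ref{Lemma 2.3.ii}, I would use the splitting $X = V \oplus W$ with $V = W^{1,\, p}_0(\Omega_1) \oplus H^1_0(\Omega_2) = \set{u \in X : u_1 = 0 \text{ on } \Gamma}$ and $W$ a closed complement, which is infinite-dimensional since $V$ has infinite codimension in $X$. The boundary term vanishes on $V$, so $\Phi \ge 0$ there. For $u$ in the unit sphere $S$ of $X$ with $u \notin V$ (equivalently $u_1|_\Gamma \ne 0$) and $\tau > 0$, \eqref{1.5} gives
\[
\Phi(\tau u) \le \frac{\tau^p}{p}\, A(u) + \frac{\tau^2}{2}\, B(u) - c\, \tau^\theta \int_\Gamma |u_1|^\theta + C \to - \infty \as \tau \to \infty,
\]
where the trace integral is finite since $\theta \le q < \bar{p}$ by \eqref{1.2} and \eqref{1.5} and positive since $u \notin V$, and the limit is $- \infty$ because $\theta > p > 2$; moreover, again by \eqref{1.2} and \eqref{1.4},
\[
\theta\, \Phi(\tau u) - \tau\, \frac{d}{d\tau}\, \Phi(\tau u) = \left(\frac{\theta}{p} - 1\right) \tau^p A(u) + \left(\frac{\theta}{2} - 1\right) \tau^2 B(u) + \int_\Gamma \tau u_1\, g(x,\tau u_1) - \theta\, G(x,\tau u_1) \ge \theta\, a_0
\]
for some $a_0 \le 0$, so $\Phi(\tau u) \le a$ implies $\frac{d}{d\tau}\, \Phi(\tau u) < 0$ once $a < a_0$ is fixed. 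With these three facts in hand the construction in the proof of Lemma \ref{Lemma 2.1}\,\ref{Lemma 2.1.ii} applies word for word: it yields a $C^1$ map $S \setminus V \to (0,\infty),\ u \mapsto \tau_u$, identifies $\Phi^a = \set{\tau u : u \in S \setminus V,\ \tau \ge \tau_u}$, shows $\Phi$ has no critical points in $\Phi^a$, and produces a radial deformation retraction of $X \setminus V$ onto $\Phi^a$. Hence
\[
C_q(\Phi,\infty) = H_q(X,\Phi^a) \isom H_q(X, X \setminus V) \isom H_q(W, W \setminus \set{0}) = 0 \quad \forall q,
\]
since $W$ is infinite-dimensional.

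The main obstacle is the strong-convergence step in \ref{Lemma 2.3.i}: because the leading part of $\Phi$ is not one homogeneous operator, passing from a bounded \PS{} sequence to a convergent one requires treating the $p$-Laplacian block on $\Omega_1$ via the $(S_+)$ property and the Laplacian block on $\Omega_2$ via the standard Hilbert-space argument simultaneously, the two coupled only through the compact trace term on $\Gamma$; the delicate point is the sign bookkeeping for the limits of the two gradient integrals that makes the $(S_+)$ hypothesis applicable. In \ref{Lemma 2.3.ii} the only departure from the earlier lemmas is that $\tau \mapsto \Phi(\tau u)$ mixes the powers $\tau^p$ and $\tau^2$ instead of being a single power, but since $\theta$ exceeds both $p$ and $2$ the two properties actually used in the deformation argument — decay to $- \infty$ along rays outside $V$ and strict decrease of $\Phi(\tau u)$ below the level $a$ — survive unchanged.
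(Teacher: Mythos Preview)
Your proof is correct and follows essentially the same approach as the paper. The paper is terser: for \ref{Lemma 2.3.i} it derives the same boundedness identity and then defers to ``a standard argument'' for the convergent subsequence (which you spell out via the $(S_+)$ property), and for \ref{Lemma 2.3.ii} it specifies the complement explicitly as $W = \bgset{u \in X : \Delta u = 0 \text{ in } \Omega \setminus \Gamma}$ (the harmonic extensions of the common trace), whereas you invoke an abstract closed complement --- a point where the paper's concrete choice supplies the existence you assume.
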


\begin{proof}
\ref{Lemma 2.3.i} If $\seq{u_j}$ is a \PS{} sequence for $\Phi$, then
\begin{multline*}
\left(\frac{\theta}{p} - 1\right) \int_{\Omega_1} |\nabla u_{j1}|^p + \left(\frac{\theta}{2} - 1\right) \int_{\Omega_2} |\nabla u_{j2}|^2 = \theta\, \Phi(u_j) - \dualp{\Phi'(u_j)}{u_j}\\[10pt]
+ \int_\Gamma \theta\, G(x,u_{j1}) - u_{j1}\, g(x,u_{j1}) \le \o(\norm{u_j}) + \O(1)
\end{multline*}
by \eqref{1.2} and \eqref{1.4}, so $\norm{u_j}$ is bounded and a standard argument gives a convergent subsequence.

\ref{Lemma 2.3.ii} The argument in the proof of Lemma \ref{Lemma 2.1} \ref{Lemma 2.1.ii} can be repeated with the direct sum decomposition $X = V \oplus W$, where $V = W^{1,\, p}_0(\Omega_1) \oplus H^1_0(\Omega_2)$ and $W = \bgset{u \in X : \Delta u = 0 \text{ in } \Omega \setminus \Gamma}$, i.e., $u \in X$ belongs to $W$ if and only if
\[
\int_{\Omega_1} \nabla u_1 \cdot \nabla v_1 + \int_{\Omega_2} \nabla u_2 \cdot \nabla v_2 = 0 \quad \forall v \in V. \QED
\]
\end{proof}

\section{Proofs of Theorems \ref{Theorem 1.4} and \ref{Theorem 1.5}}

In the absence of a compact Sobolev trace imbedding, the main technical tool we use here for handling the convergence matters is the concentration compactness principle of Lions \cite{MR778970,MR879032}. This is expressed as the following profile decomposition for bounded sequences in $H^1(\R^N)$ (see Tintarev and Fieseler \cite{MR2294665}).

\begin{proposition} \label{Proposition 3.1}
Let $u_k \in H^1(\R^N)$ be a bounded sequence, and assume that there is a constant $\delta > 0$ such that if $u_k(\cdot + y_k) \rightharpoonup w \ne 0$ on a renumbered subsequence for some $y_k \in \R^{N-1}$ with $|y_k| \to \infty$, then $\norm{w} \ge \delta$. Then there are $m \in \N$, $w^{(n)} \in H^1(\R^N)$, $y^{(n)}_k \in \R^{N-1},\, y^{(1)}_k = 0$ with $k \in \N$, $n \in \set{1,\dots,m}$, $w^{(n)} \ne 0$ for $n \ge 2$, such that, on a renumbered subsequence,
\begin{gather}
u_k(\cdot + y^{(n)}_k) \rightharpoonup w^{(n)}, \label{3.1}\\[10pt]
\big|y^{(n)}_k - y^{(l)}_k\big| \to \infty \text{ for } n \ne l, \label{3.2}\\[7.5pt]
\sum_{n=1}^m\, \norm{w^{(n)}}^2 \le \liminf\, \norm{u_k}^2, \label{3.3}\\[5pt]
u_k - \sum_{n=1}^m\, w^{(n)}(\cdot - y^{(n)}_k) \to 0 \text{ in } L^p(\R^{N-1}) \quad \forall p \in (2,\bar{2}). \label{3.4}
\end{gather}
\end{proposition}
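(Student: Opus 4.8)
The plan is to prove the decomposition by an iterative extraction of profiles along the group $D$ of tangential translations $u \mapsto u(\cdot - y)$, $y \in \R^{N-1}$, which act as isometries of $H^1(\R^N)$ for the inner product $\ip{\cdot}{\cdot}$ and leave the trace imbedding $H^1(\R^N) \hookrightarrow L^p(\R^{N-1})$ invariant. Set $y^{(1)}_k = 0$ and, passing to a subsequence, let $w^{(1)}$ be the weak limit of $u_k$. Inductively, suppose profiles $w^{(1)},\dots,w^{(n)}$ and translation sequences $y^{(1)}_k,\dots,y^{(n)}_k$ have been produced so that $u_k(\cdot + y^{(j)}_k) \rightharpoonup w^{(j)}$, the $\abs{y^{(j)}_k - y^{(l)}_k} \to \infty$ for $j \ne l$, $\abs{y^{(j)}_k} \to \infty$ and $\norm{w^{(j)}} \ge \delta$ for $j \ge 2$, and the remainder $r^{(n)}_k := u_k - \sum_{j=1}^n w^{(j)}(\cdot - y^{(j)}_k)$ satisfies $r^{(n)}_k(\cdot + y^{(l)}_k) \rightharpoonup 0$ for every $l \le n$ (the base case $n=1$ being immediate). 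If $r^{(n)}_k \to 0$ in $L^p(\R^{N-1})$ for all $p \in (2,\bar{2})$, stop and put $m = n$; otherwise invoke a trace version of Lions' vanishing lemma.

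The vanishing lemma needed is: if a bounded sequence $v_k$ in $H^1(\R^N)$ has $\sup_{z \in \R^{N-1}} \int_{(z + Q') \times \R} \big(\abs{\nabla v_k}^2 + v_k^2\big) \to 0$ over unit tangential cubes $Q'$, then $v_k \to 0$ in $L^p(\R^{N-1})$ for every $p \in (2,\bar 2)$; this follows from the Sobolev trace imbedding on a single slab $Q' \times \R$ together with the Lions covering argument. Applying the contrapositive to $v_k = r^{(n)}_k$ yields $\varepsilon > 0$ and, after a subsequence, points $y^{(n+1)}_k \in \R^{N-1}$ with $\int_{(y^{(n+1)}_k + Q') \times \R}\big(\abs{\nabla r^{(n)}_k}^2 + (r^{(n)}_k)^2\big) \ge \varepsilon$. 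Translating by $y^{(n+1)}_k$, extracting a further subsequence, and using the compact imbedding $H^1(Q' \times \R) \hookrightarrow L^2(Q' \times (-R,R))$ (with the $x_N$-tail controlled by the uniform $H^1$ bound) produces a weak limit $w^{(n+1)} \ne 0$, whence the standing hypothesis gives $\norm{w^{(n+1)}} \ge \delta$. A short argument gives $\abs{y^{(n+1)}_k - y^{(l)}_k} \to \infty$ for $l \le n$ and $\abs{y^{(n+1)}_k} \to \infty$: if $y^{(n+1)}_k - y^{(l)}_k$ stayed bounded along a subsequence, then $r^{(n)}_k(\cdot + y^{(n+1)}_k)$ would be a bounded translate of $r^{(n)}_k(\cdot + y^{(l)}_k) \rightharpoonup 0$, hence itself $\rightharpoonup 0$, contradicting $w^{(n+1)} \ne 0$. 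The same computation gives $u_k(\cdot + y^{(n+1)}_k) \rightharpoonup w^{(n+1)}$ and $r^{(n+1)}_k(\cdot + y^{(l)}_k) \rightharpoonup 0$ for all $l \le n+1$, closing the induction.

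For the norm bound and termination, expand $r^{(n)}_k = r^{(n-1)}_k - w^{(n)}(\cdot - y^{(n)}_k)$ in the Hilbert norm and use $\ip{r^{(n-1)}_k}{w^{(n)}(\cdot - y^{(n)}_k)} = \ip{r^{(n-1)}_k(\cdot + y^{(n)}_k)}{w^{(n)}} \to \norm{w^{(n)}}^2$ to get $\norm{r^{(n)}_k}^2 = \norm{u_k}^2 - \sum_{j=1}^n \norm{w^{(j)}}^2 + \o(1)$. Since $\norm{r^{(n)}_k}^2 \ge 0$, this yields $\sum_{j=1}^n \norm{w^{(j)}}^2 \le \liminf \norm{u_k}^2$ for every $n$, which is \eqref{3.3}; because $\norm{w^{(j)}} \ge \delta$ for $j \ge 2$, the iteration must terminate after finitely many steps at some $m \in \N$. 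At termination $r^{(m)}_k \to 0$ in $L^p(\R^{N-1})$ for all $p \in (2,\bar 2)$, which is \eqref{3.4}, while \eqref{3.1} and \eqref{3.2} were recorded along the way.

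The main obstacle is the trace analogue of Lions' lemma used in the second paragraph: localization is available only in the tangential variable $x'$, while $x_N$ ranges over all of $\R$, so the covering/concentration argument must be run on slabs $Q' \times \R$, combining the trace imbedding $H^1(Q' \times \R) \hookrightarrow L^p(Q' \times \set{0})$ with local compactness in the bounded box $Q' \times (-R,R)$ to recover a genuinely nonzero profile after translation. The remaining ingredients — the asymptotic-orthogonality bookkeeping across the finitely many iterations, and the upgrade from $L^p$ convergence for one exponent to all $p \in (2,\bar 2)$ (via the vanishing lemma itself, or by interpolating the remainder against its uniform bounds in $L^2(\R^{N-1})$ and in $L^{q}(\R^{N-1})$ for $q$ close to $\bar 2$) — are routine once this lemma is in place.
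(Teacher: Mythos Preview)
The paper does not actually prove this proposition; it is quoted from Tintarev and Fieseler \cite{MR2294665} as a black box. Your outline is the standard iterative profile extraction in that spirit, and the overall architecture --- extract a weak limit, subtract it, iterate until the remainder vanishes in the trace $L^p$ norm, with termination forced by $\sum_j\norm{w^{(j)}}^2\le\liminf\norm{u_k}^2$ together with the threshold $\norm{w^{(j)}}\ge\delta$ --- is correct.

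There is, however, a real gap in the extraction step. You measure concentration by the $H^1$-mass on the \emph{infinite} slab $(z+Q')\times\R$ and then claim that if $\int_{(y_k+Q')\times\R}\big(|\nabla r^{(n)}_k|^2+(r^{(n)}_k)^2\big)\ge\varepsilon$, the weak limit of $r^{(n)}_k(\cdot+y_k)$ is nonzero, ``with the $x_N$-tail controlled by the uniform $H^1$ bound.'' But a uniform $H^1$ bound does not make the $x_N$-tail small: the slab mass can slide off to $|x_N|\to\infty$ (take $r_k(x)=\varphi(x',x_N-k)$ for a fixed bump $\varphi$) or sit entirely in oscillating gradients, and in either case the weak limit is $0$. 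Since only tangential translations are permitted, you cannot follow this mass. The remedy is to localize the concentration function at the interface: for instance use $\sup_{z\in\R^{N-1}}\int_{z+Q'}|v_k|^p$. The vanishing direction then follows from the local trace estimate $\|v\|_{L^p(z+Q')}^2\le C\,\|v\|_{H^1((z+Q')\times(-1,1))}^2$ summed over a tangential tiling, while in the non-vanishing direction $\int_{y_k+Q'}|r^{(n)}_k|^p\ge\varepsilon$ combined with the \emph{compact} trace imbedding $H^1(Q'\times(-1,1))\hookrightarrow L^p(Q')$ forces $\int_{Q'}|w^{(n+1)}|^p\ge\varepsilon>0$. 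With this modification the rest of your argument goes through unchanged.
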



Recall that $u_k \in \M$ is a critical sequence for $\restr{J}{\M}$ at the level $c \in \R$ if
\begin{equation} \label{3.5}
J'(u_k) - \mu_k\, I'(u_k) \to 0, \qquad J(u_k) \to c
\end{equation}
for some sequence $\mu_k \in \R$. The first limit is equivalent to
\begin{equation} \label{3.6}
\int_{\R^N} \nabla u_k \cdot \nabla v + V(x)\, u_k\, v = c_k \int_{\R^{N-1}} |u_k|^{p-2}\, u_k\, v + \o(\norm{v}) \quad \forall v \in H^1(\R^N)
\end{equation}
with $c_k = (p/2)\, \mu_k$. Since $\sqrt{J(\cdot)}$ is an equivalent norm on $H^1(\R^N)$, $u_k$ is bounded by the second limit in \eqref{3.5}, so taking $v = u_k$ shows that $c_k \to c$. If $u_k(\cdot + y_k) \rightharpoonup w$ on a renumbered subsequence for some $y_k \in \R^{N-1}$ with $|y_k| \to \infty$, replacing $v$ with $v(\cdot - y_k)$ in \eqref{3.6}, making the change of variable $x \mapsto x + y_k$, and passing to the limit using \eqref{1.12} now gives
\begin{equation} \label{3.7}
\int_{\R^N} \nabla w \cdot \nabla v + V^\infty\, wv = c \int_{\R^{N-1}} |w|^{p-2}\, wv \quad \forall v \in H^1(\R^N).
\end{equation}
Taking $v = w$ gives $\norm{w}^2 = c \norm[p]{w}^p$, where $\norm[p]{\cdot}$ denotes the $L^p$-norm in $\R^{N-1}$, so if $w \ne 0$, then it follows that $c > 0$ and $\norm{w} \ge \big((\lambda_1^\infty)^{p/2}/c\big)^{1/(p-2)}$ since $\norm{w}^2/\norm[p]{w}^2 \ge \lambda_1^\infty$.

\begin{lemma} \label{Lemma 3.2}
Let $u_k \in \M$ be a critical sequence for $\restr{J}{\M}$ at the level $c \in \R$. Then it admits a renumbered subsequence that satisfies, in addition to the conclusions of Proposition \ref{Proposition 3.1},
\begin{gather}
\label{3.8} \int_{\R^N} \nabla w^{(1)} \cdot \nabla v + V(x)\, w^{(1)}\, v = c \int_{\R^{N-1}} |w^{(1)}|^{p-2}\, w^{(1)}\, v \quad \forall v \in H^1(\R^N),\\[5pt]
\label{3.9} \int_{\R^N} \nabla w^{(n)} \cdot \nabla v + V^\infty\, w^{(n)}\, v = c \int_{\R^{N-1}} |w^{(n)}|^{p-2}\, w^{(n)}\, v \quad \forall v \in H^1(\R^N),\, n = 2,\dots,m,\\[7.5pt]
\label{3.10} J(w^{(1)}) = c\, I(w^{(1)}), \qquad J^\infty(w^{(n)}) = c\, I(w^{(n)}), \quad n = 2,\dots,m,\\[7.5pt]
\label{3.11} \sum_{n=1}^m\, I(w^{(n)}) = 1, \qquad J(w^{(1)}) + \sum_{n=2}^m\, J^\infty(w^{(n)}) = c,\\[5pt]
\label{3.12} u_k - \sum_{n=1}^m\, w^{(n)}(\cdot - y^{(n)}_k) \to 0 \text{ in } H^1(\R^N).
\end{gather}
\end{lemma}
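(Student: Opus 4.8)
The plan is to run the profile decomposition of Proposition~\ref{Proposition 3.1} on the critical sequence $u_k$, and then show that the profiles $w^{(n)}$ so produced satisfy the additional properties \eqref{3.8}--\eqref{3.12}. First I would verify the hypothesis of Proposition~\ref{Proposition 3.1}: if $u_k(\cdot + y_k) \rightharpoonup w \ne 0$ with $|y_k| \to \infty$, then by the computation already carried out before the statement of the lemma (leading to \eqref{3.7} and the ensuing estimate) one has $\norm{w} \ge \big((\lambda_1^\infty)^{p/2}/c\big)^{1/(p-2)} =: \delta > 0$, so the dichotomy-free decomposition applies and we obtain $m$, the profiles $w^{(n)}$, and the shifts $y^{(n)}_k$ with $y^{(1)}_k = 0$ satisfying \eqref{3.1}--\eqref{3.4}.

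Next I would establish the Euler--Lagrange equations \eqref{3.8} and \eqref{3.9} for the profiles. Equation \eqref{3.8} is immediate: since $y^{(1)}_k = 0$, \eqref{3.1} says $u_k \rightharpoonup w^{(1)}$ in $H^1(\R^N)$, and passing to the limit in \eqref{3.6} (with $v$ fixed, using weak convergence in $H^1$, strong convergence in $L^p(\R^{N-1})$ via the local compactness of the trace imbedding on bounded sets, and $c_k \to c$) gives \eqref{3.8}. For $n \ge 2$, $|y^{(n)}_k| \to \infty$, so one replaces $v$ by $v(\cdot - y^{(n)}_k)$ in \eqref{3.6}, changes variables $x \mapsto x + y^{(n)}_k$, and passes to the limit using the second condition in \eqref{1.12} (the potential $V$ converging to $V^\infty$ along the horizontal shifts) to obtain \eqref{3.9}; this is precisely the argument already sketched in the paragraph preceding the lemma. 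Then \eqref{3.10} follows by testing \eqref{3.8} with $w^{(1)}$ and \eqref{3.9} with $w^{(n)}$.

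The substantive part is \eqref{3.11} and the strong convergence \eqref{3.12}, and this is where I expect the main obstacle to lie. The first identity in \eqref{3.11} should come from the Brezis--Lieb-type splitting implicit in \eqref{3.4}: writing $v_k := u_k - \sum_{n=1}^m w^{(n)}(\cdot - y^{(n)}_k)$, property \eqref{3.4} says $v_k \to 0$ in $L^p(\R^{N-1})$, and since $I(u_k) = 1$ with the cross terms in $\int_{\R^{N-1}}|u_k|^p$ vanishing in the limit by the orthogonality \eqref{3.2} (disjoint concentration regions along $\R^{N-1}$), one gets $\sum_{n=1}^m I(w^{(n)}) = 1$. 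For the energy identity, \eqref{3.3} gives $\sum_n \norm{w^{(n)}}^2 \le \liminf \norm{u_k}^2$; I would argue that in fact this holds with equality for $J$, $J^\infty$ in place of $\norm{\cdot}^2$ by combining the second limit in \eqref{3.5} (so $J(u_k) \to c$) with the profile equations: summing \eqref{3.10} gives $J(w^{(1)}) + \sum_{n\ge 2} J^\infty(w^{(n)}) = c\sum_n I(w^{(n)}) = c$, which is the second identity in \eqref{3.11}. The delicate point is then to upgrade the weak statement to strong $H^1$-convergence \eqref{3.12}: one must show $v_k \to 0$ in $H^1(\R^N)$, not merely in $L^p(\R^{N-1})$. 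Here I would test \eqref{3.6} against $v_k$ itself, use the profile equations \eqref{3.8}--\eqref{3.9} to cancel the linear contributions of the profiles, invoke \eqref{3.4} to kill the nonlinear term $\int_{\R^{N-1}}|u_k|^{p-2}u_k\, v_k \to 0$, and conclude that $J(v_k) \to 0$, equivalently $\norm{v_k} \to 0$ since $\sqrt{J(\cdot)}$ is an equivalent norm; one also needs the Brezis--Lieb lemma to see that $J(u_k) - \sum_n J(w^{(n)}(\cdot - y^{(n)}_k))$ and $J(v_k)$ agree asymptotically, and the shift-invariance of $J^\infty$ together with $V(\cdot + y^{(n)}_k) \to V^\infty$ to replace $J$ by $J^\infty$ on the far-away profiles. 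Keeping careful track of which energy functional ($J$ versus $J^\infty$) attaches to which profile, and justifying the vanishing of all mixed terms using \eqref{3.2}, is the bookkeeping-heavy heart of the proof.
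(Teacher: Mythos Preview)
Your proposal is correct and takes essentially the same approach as the paper. The only difference is emphasis: the paper disposes of \eqref{3.12} in one line (``follows from \eqref{3.4}, \eqref{3.6}, and the continuity of the Sobolev imbedding'') and instead devotes its detail to proving $\sum_n \norm[p]{w^{(n)}}^p = 1$ via a $C^\infty_0$-density/disjoint-support argument, whereas you reverse the emphasis; your appeal to Brezis--Lieb for the \emph{quadratic} functional $J$ is also unnecessary, since the cross terms in a bilinear form vanish directly from weak convergence.
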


\begin{proof}
Since $y^{(1)}_k = 0$, $u_k \rightharpoonup w^{(1)}$ by \eqref{3.1}, so \eqref{3.8} follows from \eqref{3.6}. For $n = 2,\dots,m$, \linebreak $u_k(\cdot + y^{(n)}_k) \rightharpoonup w^{(n)}$, and taking $l = 1$ in \eqref{3.2} shows that $\big|y^{(n)}_k\big| \to \infty$, so \eqref{3.9} follows from \eqref{3.7}. Taking $v = w^{(1)}$ in \eqref{3.8} gives the first equation in \eqref{3.10}, and taking $v = w^{(n)}$ in \eqref{3.9} gives the second. \eqref{3.12} follows from \eqref{3.4}, \eqref{3.6}, and the continuity of the Sobolev imbedding. The second equation in \eqref{3.11} follows from \eqref{3.10} and the first, so it only remains to prove that
\[
\sum_{n=1}^m\, \norm[p]{w^{(n)}}^p = 1.
\]

Since $\norm[p]{u_k} = 1$,
\begin{equation} \label{3.13}
\norm[p]{\sum_{n=1}^m\, w^{(n)}(\cdot - y^{(n)}_k)} \to 1
\end{equation}
by \eqref{3.4}. Let $\varepsilon > 0$. Since $C^\infty_0(\R^N)$ is dense in $H^1(\R^N) \hookrightarrow L^p(\R^{N-1})$, for $n = 1,\dots,m$, there is a $\widetilde{w}^{(n)} \in C^\infty_0(\R^N)$ such that $\norm[p]{\widetilde{w}^{(n)} - w^{(n)}} < \varepsilon/m$. Then
\begin{multline} \label{3.14}
\abs{\norm[p]{\sum_{n=1}^m\, \widetilde{w}^{(n)}(\cdot - y^{(n)}_k)} - \norm[p]{\sum_{n=1}^m\, w^{(n)}(\cdot - y^{(n)}_k)}} \le \sum_{n=1}^m\, \norm[p]{\widetilde{w}^{(n)}(\cdot - y^{(n)}_k) - w^{(n)}(\cdot - y^{(n)}_k)}\\[10pt]
= \sum_{n=1}^m\, \norm[p]{\widetilde{w}^{(n)} - w^{(n)}} < \varepsilon \quad \forall k.
\end{multline}
For sufficiently large $k$, the supports of $\widetilde{w}^{(n)}(\cdot - y^{(n)}_k)$ are pairwise disjoint by \eqref{3.2} and hence
\begin{equation} \label{3.15}
\norm[p]{\sum_{n=1}^m\, \widetilde{w}^{(n)}(\cdot - y^{(n)}_k)}^p = \sum_{n=1}^m\, \norm[p]{\widetilde{w}^{(n)}(\cdot - y^{(n)}_k)}^p = \sum_{n=1}^m\, \norm[p]{\widetilde{w}^{(n)}}^p.
\end{equation}
Combining \eqref{3.13} -- \eqref{3.15} gives
\[
\abs{\left(\sum_{n=1}^m\, \norm[p]{\widetilde{w}^{(n)}}^p\right)^{1/p} - 1} \le \varepsilon.
\]
Let $\varepsilon \to 0$.
\end{proof}

\subsection{Proof of Theorem \ref{Theorem 1.4}}

Specializing to the case $V(x) \equiv V^\infty$ in Lemma \ref{Lemma 3.2} gives the following lemma.

\begin{lemma} \label{Lemma 3.3}
Let $u_k \in \M$ be a critical sequence for $\restr{J^\infty}{\M}$ at the level $c \in \R$. Then it admits a renumbered subsequence that satisfies, in addition to the conclusions of Proposition \ref{Proposition 3.1},
\begin{gather}
\label{3.16} \int_{\R^N} \nabla w^{(n)} \cdot \nabla v + V^\infty\, w^{(n)}\, v = c \int_{\R^{N-1}} |w^{(n)}|^{p-2}\, w^{(n)}\, v \quad \forall v \in H^1(\R^N),\, n = 1,\dots,m,\\[7.5pt]
\label{3.17} J^\infty(w^{(n)}) = c\, I(w^{(n)}), \quad n = 1,\dots,m,\\[7.5pt]
\label{3.18} \sum_{n=1}^m\, I(w^{(n)}) = 1, \qquad \sum_{n=1}^m\, J^\infty(w^{(n)}) = c,\\[5pt]
\label{3.19} u_k - \sum_{n=1}^m\, w^{(n)}(\cdot - y^{(n)}_k) \to 0 \text{ in } H^1(\R^N).
\end{gather}
\end{lemma}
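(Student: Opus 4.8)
The plan is to read off Lemma~\ref{Lemma 3.3} as the specialization of Lemma~\ref{Lemma 3.2} to a constant potential. First I would observe that when $V \equiv V^\infty$, hypothesis \eqref{1.12} holds trivially, since both of its conditions reduce to the single inequality $V^\infty > 0$; hence Lemma~\ref{Lemma 3.2} applies verbatim to any critical sequence $u_k \in \M$ for $\restr{J}{\M}$ at a level $c \in \R$. Moreover, in this case $J$ and $J^\infty$ coincide as functionals on $H^1(\R^N)$, so there is no longer any distinction between the first profile $w^{(1)}$ and the remaining profiles $w^{(n)},\, n \ge 2$, in the conclusions of Lemma~\ref{Lemma 3.2}.

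Concretely, I would go through the five groups of conclusions and collapse them: equations \eqref{3.8} and \eqref{3.9} both become \eqref{3.16}; the two identities in \eqref{3.10} merge into \eqref{3.17}; the two equalities in \eqref{3.11} (using $J = J^\infty$ in the second) become \eqref{3.18}; and \eqref{3.12} is literally \eqref{3.19}. This is the whole proof.

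The one point worth a remark is the hypothesis of Proposition~\ref{Proposition 3.1}, namely the uniform lower bound $\delta$ on the norms of nonzero weak limits of escaping translates. That bound was already produced in the discussion before Lemma~\ref{Lemma 3.2}: any such limit $w$ satisfies \eqref{3.7}, and testing with $v = w$ gives $\norm{w} \ge \big((\lambda_1^\infty)^{p/2}/c\big)^{1/(p-2)}$ when $w \ne 0$; for $V \equiv V^\infty$ the identity \eqref{3.7} in fact holds for every weak limit of every translate $u_k(\cdot + y_k)$, so the same computation furnishes $\delta$. Thus there is really no obstacle here — Lemma~\ref{Lemma 3.3} is a bookkeeping corollary of Lemma~\ref{Lemma 3.2}, and I would state its proof in a sentence or two.
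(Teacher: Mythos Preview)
Your proposal is correct and matches the paper's approach exactly: the paper proves Lemma~\ref{Lemma 3.3} in a single sentence by specializing Lemma~\ref{Lemma 3.2} to the constant potential $V \equiv V^\infty$, and your breakdown of how each conclusion collapses is precisely the content of that specialization.
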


Let $u_k \in \M$ be a critical sequence for $\restr{J^\infty}{\M}$ at the level $\lambda_1^\infty$ that satisfies the conclusions of Lemma \ref{Lemma 3.3}. Then
\begin{equation} \label{3.20}
\sum_{n=1}^m\, I(w^{(n)}) = 1, \qquad \sum_{n=1}^m\, J^\infty(w^{(n)}) = \lambda_1^\infty
\end{equation}
by \eqref{3.18}, and writing $I(w^{(n)}) = t_n$ and using $J^\infty(w^{(n)}) \ge \lambda_1^\infty\, t_n^{2/p}$ gives
\[
\sum_{n=1}^m\, t_n = 1, \qquad \sum_{n=1}^m\, t_n^{2/p} \le 1.
\]
Since $p > 2$, this implies that there is exactly one nonzero $t_n$, say, $t_{n_0}$. Then it follows from \eqref{3.20} that $w^{(n_0)}$ is a minimizer.

Let $w_1^\infty = \abs{w^{(n_0)}}$ and note that $w_1^\infty \ge 0$ is also a minimizer. If $w_1^\infty(x_0) = \nolinebreak 0$ for some \linebreak $x_0 \in \R^N \setminus \R^{N-1}$, then by the strong maximum principle, $w_1^\infty = 0$ in the half-space $\Omega$ determined by $\R^{N-1}$ that contains $x_0$, and hence also on $\R^{N-1}$ by the continuity of the trace imbedding $H^1(\Omega) \hookrightarrow L^p(\R^{N-1})$. This is impossible since $\norm[p]{w_1^\infty} = 1$, so $w_1^\infty > 0$ in $\R^N \setminus \R^{N-1}$. If $w_1^\infty(x_0) = 0$ at some $x_0 \in \R^{N-1}$, then by the Hopf lemma, $\partial (w_1^\infty)_+/\partial x_N(x_0) > 0$ and $\partial (w_1^\infty)_-/\partial x_N(x_0) < 0$, so $\jump{\partial w_1^\infty/\partial x_N}(x_0) > 0$. This violates the jump condition at $x_0$, so $w_1^\infty > 0$ on $\R^{N-1}$ as well.

\subsection{Proof of Theorem \ref{Theorem 1.5}}

Let $u_k \in \M$ be a critical sequence for $\restr{J}{\M}$ at the level $\lambda_1$ that satisfies the conclusions of Lemma \ref{Lemma 3.2}. Then
\begin{equation} \label{3.21}
\sum_{n=1}^m\, I(w^{(n)}) = 1, \qquad J(w^{(1)}) + \sum_{n=2}^m\, J^\infty(w^{(n)}) = \lambda_1
\end{equation}
by \eqref{3.11}, and writing $I(w^{(n)}) = t_n$ and using $J(w^{(1)}) \ge \lambda_1\, t_1^{2/p},\, J^\infty(w^{(n)}) \ge \lambda_1^\infty\, t_n^{2/p}$ for $n = 2,\dots,m$ gives
\[
\sum_{n=1}^m\, t_n = 1, \qquad \lambda_1\, t_1^{2/p} + \lambda_1^\infty \sum_{n=2}^m\, t_n^{2/p} \le \lambda_1.
\]
Since $p > 2$ and $\lambda_1 < \lambda_1^\infty$, this implies that $m = 1$. Then it follows from \eqref{3.21} that $w_1 = \abs{w^{(1)}}$ is a minimizer, and $w_1 > 0$ as in the proof of Theorem \ref{Theorem 1.4}.

\def\cdprime{$''$}

\end{document}